\newtheorem*{Theorem4.5}{Theorem \ref{n=2Cor}}
\newtheorem{Theorem}{Theorem}[section]
\newtheorem{Corollary}[Theorem]{Corollary}
\newtheorem{Lemma}[Theorem]{Lemma}
\newtheorem{Proposition}[Theorem]{Proposition}
 { \theoremstyle{definition}
\newtheorem{Definition}[Theorem]{Definition}
\newtheorem{Example}[Theorem]{Example}
\newtheorem{Remark}[Theorem]{Remark} }
\numberwithin{equation}{section}
\def\ed{\mathrm{d}}
\def\I{\mathcal I}
\def\R{\mathbb R}
\def\x{{\bf x}}
\def\f{{\bf f}}
\def\u{{\bf u}}
\def\v{{\bf v}}
\def\g{{\bf g}}
\def\y{{\bf y}}
\def\rank{\operatorname{rank}}
\def\spn{\operatorname{span}}
\def\diag{\operatorname{diag}}
\def\W{\wedge}
\def\ct{\mathcal{C}}
\def\bs{\boldsymbol}
\def\bsomega{{\bs{\omega}}}
\def\bseta{{\bs{\eta}}}
\begin{document}
\allowdisplaybreaks

\newcommand{\arXivNumber}{1902.00598}

\renewcommand{\PaperNumber}{063}

\FirstPageHeading

\ShortArticleName{Dynamic Equivalence of Control Systems and Infinite Permutation Matrices}

\ArticleName{Dynamic Equivalence of Control Systems\\ and Infinite Permutation Matrices}

\Author{Jeanne N.~CLELLAND~$^\dag$, Yuhao HU~$^\dag$ and Matthew W.~STACKPOLE~$^\ddag$}

\AuthorNameForHeading{J.N.~Clelland, Y.~Hu and M.W.~Stackpole}

\Address{$^\dag$~Department of Mathematics, 395 UCB, University of Colorado, Boulder, CO 80309-0395, USA}
\EmailD{\href{mailto:Jeanne.Clelland@colorado.edu}{Jeanne.Clelland@colorado.edu}, \href{mailto:Yuhao.Hu@colorado.edu}{Yuhao.Hu@colorado.edu}}

\Address{$^\ddag$~Maxar Technologies, 1300 W.~120th Ave, Westminster, CO 80234, USA}
\EmailD{\href{mailto:Matt.Stackpole@maxar.com}{Matt.Stackpole@maxar.com}}

\ArticleDates{Received February 05, 2019, in final form August 20, 2019; Published online August 26, 2019}

\Abstract{To each dynamic equivalence of two control systems is associated an infinite permutation matrix. We investigate how such matrices are related to the existence of dynamic equivalences.}

\Keywords{dynamic equivalence; control systems}

\Classification{34H05; 58A15; 58A17}

\section{Introduction}

A \emph{control system} is an underdetermined ODE system of the form
\begin{gather*}
\dot \x = \f(t,\x,\u),
\end{gather*}
where $\x = (x_i)$ are called the \emph{state variables} and $\u = (u_\alpha)$ the \emph{control variables}. The meaning of ``control'' is clear: Under suitable regularity conditions, specifying a~control function $\u(t)$ and an initial state $\x(t_0)$ uniquely determines a local ``trajectory'' $\x(t)$ that satisfies the
ODE system and the initial state. When $\f$ does not explicitly depend on $t$, the control system is said to be \emph{autonomous}, which we will assume throughout this paper.

Let $\dot \x = \f(\x,\u)$ and $\dot\y = \g(\y,\v)$ be two control systems. Suppose that there exist mappings $\phi = \phi\big(\x,\u,\dot\u,\ddot\u,\dots ,\u^{(p)}\big)$ and $\psi=\psi\big(\y,\v,\dot\v,\ddot\v,\dots ,\v^{(q)}\big)$ such that,
\begin{itemize}\itemsep=0pt
\item for any solution $(\x(t), \u(t))$ of the first system, the function $(\y,\v) = \phi\big(\x,\u,\dot\u,\ddot\u,\dots ,\u^{(p)}\big)$ is a solution of the second;
\item for any solution $(\y(t), \v(t))$ of the second system, the function $(\x,\u) = \psi\big(\y,\v,\dot\v,\ddot\v,\dots ,\allowbreak \v^{(q)}\big)$ is a solution of the first;
\item moreover, applying $\phi$ and $\psi$ successively to a solution $(\x(t),\u(t))$ of the first system yields the same solution $(\x(t), \u(t))$, and, similarly, applying $\psi$ and $\phi$ successively to a solution $(\y(t), \v(t))$ of the second system yields the same solution $(\y(t), \v(t))$.
\end{itemize}
If all these conditions are satisfied, we say that the pair of maps $(\phi,\psi)$ establishes a \emph{dynamic equivalence} between the two control systems. Intuitively, a dynamic equivalence provides a~one-to-one correspondence between the solutions of one control system with those of the other.

Fixing a dynamic equivalence $(\phi,\psi)$, one can always find the smallest $p,q\ge 0$ so that $\phi = \phi\big(\x,\u,\dot\u,\ddot\u,\dots ,\u^{(p)}\big)$ and $\psi=\psi\big(\y,\v,\dot\v,\ddot\v,\dots ,\v^{(q)}\big)$. We call such a pair $(p,q)$ the \emph{height} of the corresponding dynamic equivalence. A dynamic equivalence with height $(0,0)$ is known as a \emph{static $($feedback$)$ equivalence}, in which case $\phi$, $\psi$ are inverses of each other as diffeomorphisms.

An immediate question is: \emph{How much more general is the notion of dynamic equivalence than that of static equivalence?} Classical results (see~\cite{MMR03,pomet95}) suggest that the answer depends on the number of control variables. In particular, a dynamic equivalence between two control systems with a single control variable is necessarily static. It is also well known that the number of control variables is invariant under a dynamic equivalence. However, in the cases of~2 or more controls, a precise answer to the question above remains largely unknown.

In \cite{stackpole}, the author considered all \emph{control-affine} systems with 3 states and 2 controls, proving that three statically non-equivalent systems are pairwise dynamically equivalent at height~$(1,1)$. In addition, he introduced a new method of studying dynamic equivalences of two control systems. He found that to each dynamic equivalence is associated an infinite permutation matrix. Intuitively, such a matrix tells us how the `generating $1$-forms' of certain prolongations of the two control systems (viewed as Pfaffian systems), when chosen appropriately, relate under a~dynamic equivalence.

In the current work, we present further properties of dynamic equivalences that can be derived using the associated infinite permutation matrices. First we prove that there is a \emph{rank matrix} (Definition~\ref{rankMatrixDef}) associated to a dynamic equivalence, which has a more `invariant' nature than an associated infinite permutation matrix (Proposition~\ref{rankProp}). Then we prove several inequalities and equalities (Propositions~\ref{rankEquality} and~\ref{rankInequality2}) satisfied by the rank matrix. Using these results, we prove an inequality satisfied by the height $(p,q)$ of a dynamic equivalence (Theorem~\ref{heightTheorem}). In particular,
this inequality implies the

\begin{Theorem4.5}The height $(p,q)$ of a dynamic equivalence between two control systems with $n_1$ and $n_2$ states, respectively, and $2$ controls must satisfy
$n_1+p = n_2+q$.
\end{Theorem4.5}

\section{Control systems and dynamic equivalence}
\subsection{Control systems}

\begin{Definition} \label{DefDynSys}
A \emph{control system with $n$ states and $m$ controls} is an underdetermined ODE system
\begin{gather}\label{ctrlSys}
\dot\x = \f(\x,\u),
\end{gather}
where \begin{gather*}\x = (x_i)\in \R^n, \qquad \u = (u_\alpha)\in \R^m,\end{gather*}
and $\f = (f_i)\colon \R^{n+m}\rightarrow \R^n$ is a smooth function satisfying $\rank\big(\frac{\partial f_i}{\partial u_\alpha}\big) = m$ on some open domain\footnote{Since our study is local, we henceforth assume that such a domain is the entire $\R^{n+m}$.} in $\R^{n+m}$. Here, $x_i$ are called the \emph{state variables}, $u_\alpha$ the \emph{control variables}.
\end{Definition}

For a control system, there is an equivalent geometric characterization. Let $\mathcal{D}\subset \R^n$ be the domain of the state variables $\x = (x_i)$. The admissible $t$-derivatives of $x_i$, as imposed by the control system, are given by specifying a submanifold $\Sigma \subset T\mathcal{D}$ that submerses onto $\mathcal{D}$ with rank-$m$ fibers. Each fiber is precisely parametrized by the control variables $\u= (u_\alpha)$. The submanifold $\Sigma$ induces an embedding
\begin{gather*}
\iota\colon \ \R\times \Sigma \hookrightarrow \R \times T\mathcal{D},
\end{gather*}
which is the identity in the $\R$-factor (with coordinate $t$) and $t$-independent in the $\Sigma$-factor. In coordinates, this embedding may be written as
\begin{gather*}
\iota(t, \x,\u) = (t, \x,\f(\x, \u)),
\end{gather*}
which satisfies
\begin{gather*}
\iota^*(\ed x_i - \dot x_i \ed t) = \ed x_i - f_i(\x,\u)\ed t.
\end{gather*}
In other words, the system \eqref{ctrlSys} corresponds to the Pfaffian system $(M,\ct_M)$, where $M:= \R\times \Sigma$, and $\ct_M$ is the restriction to $M$ of the standard contact system $\ct = \langle\ed x_i - \dot x_i \ed t\rangle_{i = 1}^n$ on the jet bundle $J^1(\R,\mathcal{D})\cong \R\times T\mathcal{D}$. Conversely, let $\Sigma\subset T\mathcal{D}$ be a submanifold that submerses onto a~domain $\mathcal{D}\subset \R^n$ with rank-$m$ fibers. The Pfaffian system $(M,\ct_M)$ corresponds to a~control system with $n$ states and $m$ controls.

\subsection{Prolongations of a control system}

\begin{Definition} Let $(M,\ct_M)$ be a control system with $n$ states and $m$ controls. Let $(t,\x,\u)$ be coordinates on~$M$. Suppose that $\ct_M$ is generated by $\ed x_i - f_i(\x,\u)\ed t$ $(i = 1,\dots ,n)$. By definition, the \emph{first total prolongation}\footnote{Geometrically, $M^{(1)}$ is known as the \emph{space of integral line elements} of $(M,\I)$. See \cite{BCG}. In particular, one can show that this definition of $\big(M^{(1)}, \ct^{(1)}\big)$ is independent of the choice of coordinates on $M$.} of $(M,\ct_M)$ is the Pfaffian system $\big(M^{(1)}, \ct^{(1)}\big)$, where $M^{(1)} = M\times\R^m$ with the coordinates $\big(t,\x,\u,\u^{(1)}\big)$; $\ct^{(1)}$ is the Pfaffian system generated by
\begin{gather*}
\ed x_i - f_i(\x,\u)\ed t, \qquad \ed u_\alpha - u_\alpha^{(1)}\ed t, \qquad i = 1,\dots ,n, \qquad \alpha = 1,\dots ,m.
\end{gather*}
\end{Definition}

Let $k$ be a positive integer. One can start from a control system $(M,\ct_M)$ with $n$ states and $m$ controls and generate total prolongations successively for $k$ times. The result is called the \emph{$k$-th total prolongation of $(M,\ct)$}, denoted as $\big(M^{(k)}, \ct^{(k)}\big)$, where $M^{(k)}$ has the coordinates $\big(t,\x,\u, \u^{(1)}, \dots , \u^{(k)}\big)$ and $\ct^{(k)}$ is generated by the $1$-forms
\begin{gather*}
 \ed x_i - f_i(\x,\u)\ed t, \qquad \ed u_\alpha - u_\alpha^{(1)}\ed t, \qquad \ed u_\alpha^{(\ell)} - u_{\alpha}^{(\ell+1)}\ed t, \\
 i = 1,\dots ,n,\qquad \alpha = 1,\dots ,m,\qquad \ell = 1,\dots ,k-1.
\end{gather*}

When $k= 0$, we simply let $\big(M^{(0)}, \ct^{(0)}\big)$ denote $(M,\ct_M)$.

It is clear that $\big(M^{(k)},\ct^{(k)}\big)$ is a control system with $n+km$ states and $m$ controls.

\subsection{Dynamic equivalence}

Given two control systems, it is natural to regard them as equivalent if one can establish a~one-to-one correspondence between their solutions.

Of course, two control systems $\dot\x = \f(\x,\u)$ and $\dot\y = \g(\y,\v)$ are equivalent in the sense above when they can be transformed into each other by a change of variables of the form $\y = \phi(\x)$, $\v = \xi(\x,\u)$ and $\x = \phi^{-1}(\y)$, $\u = \rho(\y,\v)$. This notion of equivalence is called \emph{static equivalence}, which, in particular, requires the two equivalent control systems to have same number of states and the same number of controls. However, it is possible for two systems with different numbers of states to have a one-to-one correspondence between their solutions, as is indicated by the following standard property of jet bundles.

\begin{Proposition}\label{PropProlong}Let $(M,\ct_M)$ be a control system. Let $\pi\colon M^{(k)}\rightarrow M$ be the canonical projection from its $k$-th total prolongation. Any integral curve $\tau\colon \R\rightarrow M$ of $(M,\ct_M)$ has a unique lifting $\tau^{(k)}\colon \R\rightarrow M^{(k)}$ $($i.e., satisfying $\pi\circ\tau^{(k)} = \tau)$ to an integral curve of $\big(M^{(k)}, \ct^{(k)}\big)$. In addition, for each integral curve $\sigma\colon \R\rightarrow M^{(k)}$ of $\big(M^{(k)}, \ct^{(k)}\big)$, its projection $\pi\circ\sigma$ is an integral curve of~$(M,\ct_M)$.
\end{Proposition}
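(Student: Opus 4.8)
The plan is to reduce everything to the coordinate description of prolongation. A control system carries the independence condition $\ed t$, so every integral curve is transverse to the level sets of $t$ and may be parametrized by $t$ itself; I will accordingly write integral curves of $(M,\ct_M)$ as $\tau(t) = (t,\x(t),\u(t))$, for which the vanishing of $\tau^*(\ed x_i - f_i\,\ed t)$ is exactly the ODE system $\dot\x(t) = \f(\x(t),\u(t))$, with $\u(t)$ smooth (hence differentiable to all orders, as $\tau$ is smooth). The projection half is then immediate: the generators $\ed x_i - f_i(\x,\u)\,\ed t$ of $\ct_M$ pull back under $\pi$ to generators of $\ct^{(k)}$, i.e.\ $\pi^*\ct_M \subseteq \ct^{(k)}$, so for any integral curve $\sigma$ of $\ct^{(k)}$ and any $\omega \in \ct_M$ we have $(\pi\circ\sigma)^*\omega = \sigma^*(\pi^*\omega) = 0$; since $\pi$ is the identity in the $t$-coordinate, the independence condition is preserved and $\pi\circ\sigma$ is an integral curve of $(M,\ct_M)$.

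For the lifting half I would prove existence and uniqueness together, by induction on $k$ using $M^{(k)} = \big(M^{(k-1)}\big)^{(1)}$. Because $\pi$ forgets the fibre coordinates $\u^{(1)},\dots,\u^{(k)}$, any lift of $\tau$ is forced to have the form $\tau^{(k)}(t) = \big(t,\x(t),\u(t),\u^{(1)}(t),\dots,\u^{(k)}(t)\big)$, so the task is to pin down the functions $\u^{(\ell)}(t)$. In the base case $k=1$, the only generator of $\ct^{(1)}$ beyond those of $\ct_M$ is $\ed u_\alpha - u_\alpha^{(1)}\,\ed t$; its pullback vanishes precisely when $\u^{(1)}(t) = \dot\u(t)$, which both determines the lift uniquely and, since the remaining generators coincide with those of $\ct_M$, makes it an integral curve. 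For the inductive step, an integral curve of $\ct^{(k)}$ projects (by the projection argument above) to an integral curve of $\ct^{(k-1)}$, whose unique lift carries $\u^{(\ell)}(t) = \frac{\ed^\ell}{\ed t^\ell}\u(t)$ for $\ell \le k-1$; the single new generator $\ed u_\alpha^{(k-1)} - u_\alpha^{(k)}\,\ed t$ then forces $\u^{(k)}(t) = \frac{\ed}{\ed t}\u^{(k-1)}(t) = \frac{\ed^k}{\ed t^k}\u(t)$, uniquely. Hence the lift exists, is unique, and is explicitly the $k$-jet $t\mapsto\big(t,\x(t),\u(t),\dot\u(t),\dots,\u^{(k)}(t)\big)$.

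I expect no deep obstacle, since the statement is local and ultimately a coordinate computation; the real work is organizational. The key structural observation to exploit is that passing from $\ct^{(k-1)}$ to $\ct^{(k)}$ introduces exactly one new family of generators $\ed u_\alpha^{(k-1)} - u_\alpha^{(k)}\,\ed t$, and that the recursion $\u^{(\ell)} = \frac{\ed}{\ed t}\u^{(\ell-1)}$ it imposes is simultaneously necessary (giving uniqueness) and sufficient (giving existence). The one point genuinely demanding care is the parametrization: I must record at the outset that the independence condition $\ed t$ permits taking $t$ as the curve parameter, so that ``integral curve'' is unambiguous and the successive total derivatives $\frac{\ed^\ell}{\ed t^\ell}\u(t)$ are well defined and smooth.
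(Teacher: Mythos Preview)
Your argument is correct. The paper does not actually supply a proof of this proposition: it is introduced as ``the following standard property of jet bundles'' and then simply stated, so there is no paper proof to compare against. Your coordinate computation---parametrizing by $t$ via the independence condition, observing $\pi^*\ct_M\subseteq\ct^{(k)}$ for the projection direction, and inducting on $k$ using the new generators $\ed u_\alpha^{(k-1)} - u_\alpha^{(k)}\,\ed t$ for existence and uniqueness of the lift---is exactly the routine verification the paper is tacitly invoking, and it goes through without issue.
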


In other words, given two control systems, a one-to-one correspondence between their solutions may involve differentiation. This motivates the following notion of equivalence.

\begin{Definition}\label{DynEqDef} Two control systems $(M, \ct_M)$ and $(N,\ct_N)$ are said to be \emph{dynamically equivalent} if there exist integers $p, q \geq 0$ and submersions $\Phi\colon M^{(p)}\rightarrow N$ and $\Psi\colon N^{(q)}\rightarrow M$ that satisfy\footnote{A more careful definition would set the domains of $\Phi$ and $\Psi$ to be open subsets of $M^{(p)}$ and $N^{(q)}$, respectively. See, for example, \cite{pomet95}. Since our results are local, for the economy of notations, we will be content with
the definition presented here.}
\begin{enumerate}\itemsep=0pt
\item[(i)] $\Phi$, $\Psi$ preserve the $t$-variable and are $t$-independent in the state and control components;
\item[(ii)] $\Phi$ cannot factor through any $M^{(k)}$ for $k<p$; $\Psi$ cannot factor through any $N^{(\ell)}$ for $\ell<q$;
\item[(iii)] for each integral curve $\tau\colon \R\rightarrow M$ of $(M,\ct_M)$, $\Phi\circ\tau^{(p)}$ is an integral curve of $(N,\ct_N)$; for each integral curve $\sigma\colon \R\rightarrow N$ of $(N,\ct_N)$, $\Psi\circ\sigma^{(q)}$ is an integral curve of $(M,\ct_M)$;
\item[(iv)] letting $\tau$ and $\sigma$ be as in (iii), we have
\begin{gather*}
\tau =\Psi\circ\big(\Phi\circ \tau^{(p)}\big)^{(q)}, \qquad \sigma =\Phi\circ\big(\Psi\circ \sigma^{(q)}\big)^{(p)}.
\end{gather*}
\end{enumerate}
\end{Definition}

For the convenience of the reader, we present the commutative diagram:
\begin{equation*}
\begin{tikzcd}[column sep = 32, row sep = 30]
& M^{(p)} \arrow[dr,"\phantom{aa}\Phi", sloped]\arrow[d,"\pi", swap] & N^{(q)} \arrow[dl, "\Psi\phantom{aa}",swap, sloped]\arrow [d,"\pi"]&\\
\R \arrow[ur,"\tau^{(p)}"]\arrow[r,"\tau"]& M & N & \R. \arrow[ul,"\sigma^{(q)}", swap]\arrow[l,"\sigma", swap]
 \end{tikzcd}
\end{equation*}

\begin{Remark} We observe the following:
\begin{enumerate}\itemsep=0pt
\item[(a)]It is easy to verify that Definition \ref{DynEqDef} defines an equivalence relation.
\item[(b)] By this definition, a control system $(M,\ct_M)$ is dynamically equivalent to each of its total prolongations $\big(M^{(k)},\ct^{(k)}\big)$.
\item[(c)] A dynamic equivalence with $p = q = 0$ is a static equivalence. To see this, let $(t,\x,\u)$ and $(t,\y,\v)$ be coordinates on~$M$ and~$N$, respectively. Represent $\Phi$ in local coordinates as $(t,\y,\v) = \big(t,\phi^s(\x, \u), \phi^c(\x,\u)\big)$. (Here the superscripts `$s$' and `$c$' of $\phi$ stand for `state' and `control', respectively.) Since $\Phi$ maps integral curves of $(M,\ct_M)$ to integral curves of $(N,\ct_N)$, it is necessary that, for each $\ed y_i - g_i(\y,\v)\ed t\in \ct_N$, its pull-back
\begin{gather*}\Phi^*(\ed y_i - g_i(\y,\v)\ed t) = \ed \big(\phi^s_i(\x,\u)\big) - g_i\big(\phi^s(\x,\u), \phi^c(\x,\u)\big)\ed t\end{gather*}
is contained in $\ct_M$. It follows that $\phi^s(\x,\u)$ is independent of $\u$. A similar argument applies to~$\Psi$. Finally, Condition~(iv) in Definition~\ref{DynEqDef}
implies that $\Phi$ and $\Psi$ are inverses of each other.
\item[(d)] This definition of dynamic equivalence corresponds to the notion of \emph{endogenous transformation} in the control literature (see~\cite{MMR03}). In broader contexts, it is related to the notion of \emph{Lie--B\"acklund equivalences} (see \cite{AI79,FLMR99,Levine11}), that of \emph{$\mathcal{C}$-transformations} (see \cite{Chetverikov17,KLVjet}) and equivalences between differential algebras (see~\cite{Jakubczyk94}).
\end{enumerate}
\end{Remark}

\begin{Definition}We call the pair of integers $(p,q)$ in Definition~\ref{DynEqDef} the \emph{height} of the corresponding dynamic equivalence.
\end{Definition}

\begin{Remark} Keeping the notations from the above, $p$ is the highest derivative of $\u$ that $\Phi$ depends on, and $q$ is the highest derivative of $\v$ that $\Psi$ depends on. On the other hand, one may find the highest derivative of each $v_\beta$ that $\Psi$ depends on and sum over~$\beta$. Of course, this depends on the choice of coordinates on $N$. Such a sum is related to the notion of \emph{differential weight} (of~$\Psi$) defined in~\cite{Nicolau17}, where its relation with differential flatness is investigated.
\end{Remark}

Given two dynamically equivalent control systems $(M,\ct_M)$ and $(N,\ct_N)$, if needed, one could always apply a \emph{partial prolongation} (for details, see~\cite{stackpole}) to one of them such that the resulting systems have the same number of states and are still dynamically equivalent. When this is achieved, the proposition below
would become applicable.

 \begin{Proposition}\label{porq=0Prop} Let $(M,\ct_M)$ and $(N,\ct_N)$ be control systems with the same number of states. The height $(p,q)$ of a dynamic equivalence
between them must satisfy either $p = q = 0$ or $p,q > 0$.
 \end{Proposition}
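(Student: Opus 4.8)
The plan is to reduce everything to the single implication ``$p = 0 \Rightarrow q = 0$''. Since Definition~\ref{DynEqDef} defines an equivalence relation, the pair $(\Psi,\Phi)$ is a dynamic equivalence between $(N,\ct_N)$ and $(M,\ct_M)$ of height $(q,p)$; so once ``$p = 0 \Rightarrow q = 0$'' is established, the same statement applied to $(\Psi,\Phi)$ gives ``$q = 0 \Rightarrow p = 0$'', and together these say that $p = 0$ if and only if $q = 0$, which is exactly the asserted dichotomy.

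So assume $p = 0$. First I would show that $\Phi\colon M^{(0)} = M \to N$ is a local diffeomorphism. Both systems have the same number $n$ of states by hypothesis, and the same number $m$ of controls (the number of controls is a dynamic invariant), so $\dim M = 1 + n + m = \dim N$; and a submersion between manifolds of equal dimension is a local diffeomorphism. Working locally, let $\Phi^{-1}\colon N \to M$ denote a local inverse and let $\pi\colon N^{(q)} \to N$ be the canonical projection.

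Next I would exploit Condition~(iv). Let $\tau\colon \R \to M$ be an arbitrary integral curve of $(M,\ct_M)$ and set $\sigma := \Phi\circ\tau^{(p)} = \Phi\circ\tau$, which is an integral curve of $(N,\ct_N)$ by Condition~(iii). Then Condition~(iv) with $p = 0$ reads $\tau = \Psi\circ\sigma^{(q)}$, while $\sigma = \Phi\circ\tau$ gives $\tau = \Phi^{-1}\circ\sigma = \Phi^{-1}\circ\pi\circ\sigma^{(q)}$. Hence $\Psi\circ\sigma^{(q)} = \Phi^{-1}\circ\pi\circ\sigma^{(q)}$ for every integral curve $\sigma$ of $(N,\ct_N)$. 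Since every point of $N^{(q)}$ is of the form $\sigma^{(q)}(t_0)$ for a suitable integral curve $\sigma$ of $(N,\ct_N)$ --- one prescribes the $q$-jet of a control function $\v(\cdot)$ at $t_0$ and integrates $\dot\y = \g(\y,\v(t))$ for the state, cf.\ Proposition~\ref{PropProlong} --- it follows that $\Psi = \Phi^{-1}\circ\pi$ on all of $N^{(q)}$. Thus $\Psi$ factors through $\pi\colon N^{(q)} \to N = N^{(0)}$. If $q > 0$ this already contradicts Condition~(ii): indeed $\pi$ factors through $N^{(q-1)}$, hence so does $\Psi$. Therefore $q = 0$, as desired.

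I expect the one step requiring genuine care to be the passage from ``$\Psi\circ\sigma^{(q)} = \Phi^{-1}\circ\pi\circ\sigma^{(q)}$ for all integral curves $\sigma$'' to ``$\Psi = \Phi^{-1}\circ\pi$'': one must check that the prolonged curves $\sigma^{(q)}$ sweep out $N^{(q)}$ (density of their images plus continuity of $\Psi$ already suffices), which is where the freedom to choose control functions with arbitrarily prescribed finite jets enters. Everything else is the dimension count of the second paragraph together with routine bookkeeping with Conditions~(iii) and~(iv).
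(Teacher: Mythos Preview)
Your argument is correct and takes a genuinely different route from the paper's. You assume $p=0$ and show $q=0$ via a direct dimension count: since $\Phi\colon M\to N$ is a submersion between equidimensional manifolds it is a local diffeomorphism, and then Condition~(iv) forces $\Psi=\Phi^{-1}\circ\pi$, so $\Psi$ factors through $N^{(0)}$. The paper instead assumes height $(p,0)$ with $p>0$ and derives a contradiction using exterior differential systems: from $\Psi\circ\Phi=\pi$ on $M^{(p)}$ it gets $\pi^*\ct_M\subseteq\Phi^*\ct_N$, and since $\rank\ct_M=\rank\ct_N$ (equality of the number of \emph{states} only) this inclusion is an equality; then the Cauchy characteristics of $\pi^*\ct_M$ are the fibres of $\pi$, while $\Phi$ is constant along the Cauchy characteristics of $\Phi^*\ct_N$ (because the Cartan system of $\ct_N$ spans $T^*N$), forcing $\Phi$ to factor through $M$ and contradicting minimality of $p$.

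Your approach is more elementary---no Cauchy characteristics or Cartan systems---but it buys this simplicity by invoking $m_1=m_2$ to obtain $\dim M=\dim N$. In the paper's internal logic that is Theorem~\ref{sameSTheorem}, whose argument there \emph{uses} Proposition~\ref{porq=0Prop}; so within the paper your proof would be circular. The circularity is only apparent, of course, since Theorem~\ref{sameSTheorem} is classical with independent proofs in \cite{MMR03,pomet95}, but it is worth flagging. The paper's EDS argument sidesteps this entirely by needing only the hypothesis on states. One minor point in your write-up: the passage from ``$\Psi\circ\sigma^{(q)}=\Phi^{-1}\circ\pi\circ\sigma^{(q)}$ for $\sigma=\Phi\circ\tau$'' to ``for every integral curve $\sigma$ of $(N,\ct_N)$'' deserves a sentence---with $p=0$ the second identity in Condition~(iv) gives $\sigma=\Phi\circ(\Psi\circ\sigma^{(q)})$, so every $\sigma$ is of the form $\Phi\circ\tau$ with $\tau=\Psi\circ\sigma^{(q)}$.
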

\begin{proof}Suppose that the following commutative diagram represents a dynamic equivalence of height $(p,0)$ $(p>0)$ between $(M,\ct_M)$ and $(N,\ct_N)$:
\begin{equation*}
\begin{tikzcd}[column sep=small]
 \big(M^{(p)},{\ct^{(p)}}\big) \arrow[d,"\displaystyle\pi",swap]\arrow[rd,"\Phi"] & \\
(M,\ct_M) &(N,\ct_N).\arrow[l,"\Psi",swap]
 \end{tikzcd}
\end{equation*}
By the assumption, $\rank(\ct_M) = \rank(\ct_N)$. Since
\begin{gather*}
\pi^*\ct_M = (\Psi\circ\Phi)^*\ct_M = \Phi^*(\Psi^*\ct_M)\subseteq \Phi^*\ct_N,
\end{gather*}
and since $\pi$, $\Phi$, $\Psi$ are all submersions, it is necessary that $\langle\pi^*\ct_M\rangle = \langle\Phi^*\ct_N\rangle$. Now, $\Phi$ is constant along the Cauchy characteristics of $\Phi^*\ct_N$, since the Cartan system (see~\cite{BCG}) of $\ct_N$ generates the entire cotangent bundle of~$N$. On the other hand, the Cauchy characteristics
of $\pi^*\ct_M$ are precisely the fibres of $\pi$. This proves that $\Phi$ factors through $M$, a contradiction to the choice of~$p$. The case when $p = 0$, $q> 0$ is similar.
\end{proof}

\section[Infinite permutation matrices associated to a dynamic equivalence]{Infinite permutation matrices associated\\ to a dynamic equivalence}

In this section, we assume $p, q>0$ unless otherwise noted.

Given a control system $(M,\ct_M)$, one automatically obtains a system of projections
\begin{gather*}
\pi_{k,j}\colon \ M^{(k)}\rightarrow M^{(j)}, \qquad k\ge j.
\end{gather*}
The inverse limit of this projective system is denoted as
\begin{gather*}
M^{(\infty)}: = \varprojlim_{k} M^{(k)}.
\end{gather*}
Let $\pi_{k}\colon M^{(\infty)}\rightarrow M^{(k)}$ be the canonical projections. Since $\pi_{k,j}^{*}\ct^{(j)}\subseteq \ct^{(k)}$ for all $k\ge j\ge 0$, one can define $\ct^{(\infty)}$ to be the differential system generated by $\bigcup_{k\ge 0} \pi_k^{*}\ct^{(k)}$.

In coordinates, if $(M,\ct_M)$ corresponds to the system $\dot \x = \f(\x,\u)$, then $M^{(\infty)}$ has the standard coordinates $\big(t,\x,\u,\u^{(1)},\dots \big)$; $\ct^{(\infty)}$ is generated by the $1$-forms
\begin{gather*}
 \ed x_i - f_i(\x,\u)\ed t, \qquad \ed u_\alpha - u_\alpha^{(1)}\ed t, \qquad \ed u_\alpha^{(\ell)} - u_{\alpha}^{(\ell+1)}\ed t, \\
 i = 1,\dots ,n,\qquad \alpha = 1,\dots ,m, \qquad \ell \ge 1.
\end{gather*}
The pair $\big(M^{(\infty)}, \ct^{(\infty)}\big)$ is called the \emph{infinite prolongation} of $(M,\ct_M)$.

Now suppose that $(M,\ct_M)$ $(\dot\x = \f(\x,\u))$ and $(N,\ct_N)$ $(\dot\y = \g(\y,\v))$ are two control systems with $n_1$, $n_2$ states and $m_1$, $m_2$ controls, respectively. (Note that, at this point, we do not assume that either the number of states or the number of controls is the same for both systems). Furthermore, suppose that a dynamic equivalence of height $(p,q)$ between $(M, \ct_M)$ and $(N,\ct_N)$ is given by
\begin{gather*}
\Phi\colon \ M^{(p)}\rightarrow N, \qquad \Psi\colon \ N^{(q)}\rightarrow M.
\end{gather*}
It can be shown that $\Phi$ and $\Psi$ induce, respectively, maps
\begin{gather*}
\Phi^{(\infty)}\colon \ M^{(\infty)}\rightarrow N^{(\infty)}, \qquad \Psi^{(\infty)}\colon \ N^{(\infty)}\rightarrow M^{(\infty)},
\end{gather*}
which satisfy
\begin{gather*}
\Phi^{(\infty)}\circ \Psi^{(\infty)} = {\rm Id}, \qquad \Psi^{(\infty)} \circ\Phi^{(\infty)} = {\rm Id}.
\end{gather*}
Moreover, if we let
\begin{gather*}
\bsomega^{0} = \ed \x - \f(\x,\u)\ed t, \qquad \bsomega^{k} = \ed \u^{(k-1)} - \u^{(k)}\ed t, \qquad k\ge 1,\\
\bseta^{0} = \ed \y - \g(\y,\v)\ed t, \qquad \bseta^{k} = \ed \v^{(k-1)} - \v^{(k)}\ed t, \qquad k\ge 1,
\end{gather*}
then there exist matrices $A^i_j$, $B^i_j$ $(i,j \ge 0)$ satisfying\footnote{Here we have assumed $p,q>0$, otherwise, these properties may not hold. For instance, consider the standard dynamic equivalence between an arbitrary $(M,\ct_M)$ and $\big(M^{(k)}, \ct^{(k)}\big)$.}
\begin{enumerate}\itemsep=0pt
\item[P1.] \emph{for $k\ge 0$, we have}
\begin{gather}
{\Phi^{(\infty)}}^*\bseta^k = A^k_0\bsomega^0+ A^k_1\bsomega^1 + \cdots + A^k_{p+k}\bsomega^{p+k}, \label{om2eta}\\
{\Psi^{(\infty)}}^*\bsomega^k = B^k_0\bseta^0+ B^k_1\bseta^1 + \cdots + B^k_{q+k}\bseta^{q+k},\label{eta2om}
\end{gather}
where $A^{k}_{p+k}, B^{k}_{q+k}\ne 0$ for all $k\ge 0$.
\item[P2.] \emph{All $A_{p+k}^k$ are equal for $k\ge 1$; similarly for $B_{q+k}^k$. Hence, we denote}
\begin{gather*}A_{\infty} := A_{p+k}^k, \qquad B_{\infty} := B_{q+k}^k,\qquad k\ge 1.\end{gather*}
\item[P3.] $\rank \big(A^{0}_p\big) = \rank(A_{\infty})>0$, $\rank\big(B^{0}_q\big) = \rank(B_\infty)>0$.
\end{enumerate}

In particular, P1 follows from the definition of a dynamic equivalence of height $(p,q)$; P2 can be seen by taking the exterior derivative of~\eqref{om2eta} on both sides for $k\ge 1$ then reducing modulo $\bsomega^0,\dots ,\bsomega^{p+k}$ and similarly for~\eqref{eta2om}. For more details of these proofs, see~\cite{stackpole}.\footnote{The proofs in~\cite{stackpole} were made under the assumptions: $n_1 = n_2$ and $m_1 = m_2$, but they can be easily modified to work for the situation being considered here.}

To prove P3, note that there exists an $n_2\times m_2$ matrix $F$ such that
\begin{align*}
\ed\bseta^0 &\equiv F\bseta^1 \W \ed t \mod \bseta^0\\
&\equiv FA^1_{p+1}\bsomega^{p+1} \W \ed t\mod \bs \bsomega^0, \bsomega^1,\dots ,\bsomega^p.
\end{align*}
On the other hand,
\begin{align*}
\ed\bseta^0& = \ed\big(A^0_0\bsomega^0+\cdots + A^0_p\bsomega^p\big)\\
& \equiv -A^0_p\bsomega^{p+1}\W \ed t\mod \bsomega^0,\bsomega^1,\dots ,\bsomega^p.
\end{align*}
Consequently,
\begin{gather*}
FA^1_{p+1} = -A^0_p.
\end{gather*}
By Definition~\ref{DefDynSys}, $F$ has full rank. Therefore, $\rank\big(A^1_{p+1}\big)= \rank\big(A^0_p\big)$. The case for $B^i_j$ is similar. Property P3 follows.

\subsection{Two classical theorems}

Concerning dynamic equivalences between two control systems, the following two classical theorems are of fundamental importance.

\begin{Theorem}[\cite{MMR03, pomet95}]\label{sameSTheorem} Two dynamically equivalent control systems must have the same number of control variables.
\end{Theorem}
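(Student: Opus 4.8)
The plan is to use the infinite permutation matrices (or more precisely, the matrices $A^i_j$, $B^i_j$ of Property P1) together with the fact that $\Phi^{(\infty)}$ and $\Psi^{(\infty)}$ are mutually inverse. The key observation is that pulling back $\bseta^0$ and then pulling back again must return (modulo lower-order forms) something proportional to $\bsomega^0$, and tracking the \emph{leading terms} in this composition forces a compatibility between $m_1$ and $m_2$. First I would compute the composition ${\Psi^{(\infty)}}^*\circ{\Phi^{(\infty)}}^*$ applied to, say, $\bsomega^0$: by \eqref{eta2om} we have ${\Psi^{(\infty)}}^*\bsomega^0 = B^0_0\bseta^0 + \cdots + B^0_q\bseta^q$, and then applying ${\Phi^{(\infty)}}^*$ via \eqref{om2eta} to each $\bseta^j$ expresses the result as a combination of $\bsomega^0,\dots,\bsomega^{p+q}$. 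Since the composition is the identity, and $\bsomega^0$ has a specific leading order, I would extract the top-order coefficient (the $\bsomega^{p+q}$ term) and the bottom-order structure. The top coefficient gives $B^0_q A_\infty$ acting on $\bsomega^{p+q}$, but the left side is just $\bsomega^0$, hence $B^0_q A_\infty = 0$ on that block — no, rather one must be careful: the identity forces the coefficient of $\bsomega^{p+q}$ to vanish for $k=0$, which combined with P3 ($\rank B^0_q = \rank B_\infty > 0$, $\rank A_\infty > 0$) would be a contradiction \emph{unless} the matrices have compatible dimensions that allow $B^0_q A_\infty$ to be nonzero-yet-consistent; the cleanest route is to compare the matrices $A_\infty$ and $B_\infty$ directly.

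Concretely, the sharper approach: apply the composition to the highest-order generators. Take ${\Phi^{(\infty)}}^*\bseta^k$ for large $k$; by P1 and P2 its leading term is $A_\infty\bsomega^{p+k}$. Now apply ${\Psi^{(\infty)}}^*$ to the whole identity ${\Psi^{(\infty)}}^*{\Phi^{(\infty)}}^*\bseta^k = \bseta^k$: the leading-order term on the left is $B_\infty A_\infty\bseta^{p+q+k}$ (since ${\Psi^{(\infty)}}^*\bsomega^{p+k}$ has leading term $B_\infty\bseta^{q+p+k}$), while the right side $\bseta^k$ has leading term $\bseta^k$ with coefficient the identity $I_{m_2}$. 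Matching orders forces these to live in the same degree only after accounting for the shift — but the shift is the same on both sides in the iterated picture, so what we actually get by matching the $\bseta^{p+q+k}$-coefficients against the corresponding coefficient of $\bseta^k$ pushed up is $B_\infty A_\infty = I_{m_2}$ up to the reindexing built into P2. Symmetrically, $A_\infty B_\infty = I_{m_1}$. Two matrices whose two products are identity matrices of sizes $m_2$ and $m_1$ respectively must be square, so $m_1 = m_2$, and $A_\infty$, $B_\infty$ are inverse $m_1\times m_1$ matrices — which also refines P3 to say $A_\infty$, $B_\infty$ have \emph{full} rank $m_1$.

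The main obstacle I anticipate is bookkeeping the indices correctly: the coefficient matrices $A^k_j$ are rectangular (sizes governed by $n_1$, $n_2$, $m_1$, $m_2$ in different slots), $\bseta^0$ has $n_2$ components while $\bseta^k$ for $k\ge 1$ has $m_2$ components, and likewise for $\bsomega$; so one must check that $A_\infty$ is genuinely $m_2\times m_1$ and $B_\infty$ is $m_1\times m_2$ before the products $A_\infty B_\infty$ and $B_\infty A_\infty$ even make sense. Verifying that the ``leading term'' truly propagates through the composition — i.e., that no cancellation among the many intermediate $\bseta^j$ (respectively $\bsomega^j$) contributions can disturb the top-order coefficient — is the crux, but it follows because each ${\Phi^{(\infty)}}^*\bseta^j$ has strictly bounded order $p+j$, so only the single highest $j$ contributes to the extreme degree. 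Once $A_\infty B_\infty = I$ and $B_\infty A_\infty = I$ are established, the conclusion $m_1 = m_2$ is immediate from elementary linear algebra (a one-sided inverse between rectangular matrices of mismatched sizes cannot be two-sided).
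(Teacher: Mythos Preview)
Your leading-term computation contains a genuine error. When you apply ${\Psi^{(\infty)}}^*$ to ${\Phi^{(\infty)}}^*\bseta^k$ and track the highest-order piece, the coefficient of $\bseta^{p+q+k}$ on the left is (a pullback of) $A_\infty$ composed with $B_\infty$, while on the right the form is simply $\bseta^k$, whose $\bseta^{p+q+k}$-coefficient is \emph{zero} because $p+q>0$. There is no ``reindexing built into P2'' that identifies $\bseta^{p+q+k}$ with $\bseta^k$; these are independent elements of the coframe. The correct conclusion from your computation is therefore $A_\infty B_\infty = \bs{0}$ (and symmetrically $B_\infty A_\infty = \bs{0}$), which is exactly what the paper records immediately after \eqref{AB} and in the proof of Lemma~\ref{rankLemma}. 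From $A_\infty B_\infty=\bs 0$ one only gets $\rank(A_\infty)+\rank(B_\infty)\le \min(m_1,m_2)$, not $m_1=m_2$; and your corollary that $A_\infty$, $B_\infty$ have full rank is incompatible with this (indeed it would contradict \eqref{rankInequality1}).

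The paper's argument is different and avoids this trap. After partially prolonging so that $n_1=n_2=:n$ (and invoking Proposition~\ref{porq=0Prop} to reduce to $p,q>0$), it uses a straight dimension count on the \emph{truncated} system rather than an asymptotic leading-coefficient match: by~\eqref{om2eta}, the $n+rm_2$ linearly independent components of $\bseta^0,\dots,\bseta^r$ are linear combinations of the $n+(r+p)m_1$ components of $\bsomega^0,\dots,\bsomega^{r+p}$. If $m_2>m_1$, choose $r>m_1 p/(m_2-m_1)$ to make the target smaller than the source, a contradiction. This counting argument is what actually forces $m_1=m_2$; your product-of-leading-blocks idea, as written, does not.
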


\begin{Theorem}[\cite{cartan1914courbes,Sluis}] \label{s=1Theorem}A dynamic equivalence between two control systems with~$n$ states and single control can only have height $(p,q) = (0,0)$ $($i.e., the equivalence is static$)$.
\end{Theorem}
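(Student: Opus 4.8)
The plan is to argue by contradiction using the infinite permutation matrices and properties P1--P3, invoking Proposition~\ref{porq=0Prop} to dispose of the degenerate heights, and saving the single-control hypothesis $m_1=m_2=1$ (legitimate by Theorem~\ref{sameSTheorem}) for the very last line. There is a classical route through the derived flag of the associated Pfaffian system together with the Engel/Goursat normal form --- essentially Cartan's original argument --- but the machinery already in place gives a shorter proof.

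First I would reduce to the case $p,q>0$. By Proposition~\ref{porq=0Prop} --- applicable since the two systems have the same number $n$ of states, and since the Cartan system of any control system spans the full cotangent bundle --- the height is either $(0,0)$, in which case there is nothing to prove, or it satisfies $p,q>0$. So assume $p,q>0$; then P1--P3 are available. I would pull back $\bseta^0$ around the loop, using ${\Psi^{(\infty)}}^*\circ{\Phi^{(\infty)}}^* = \big(\Phi^{(\infty)}\circ\Psi^{(\infty)}\big)^* = \mathrm{Id}$. Substituting \eqref{om2eta} with $k=0$ and then \eqref{eta2om} gives
\[
\bseta^0 = {\Psi^{(\infty)}}^*\Big(\sum_{j=0}^{p}A^0_j\,\bsomega^j\Big) = \sum_{j=0}^{p}\big(A^0_j\circ\Psi^{(\infty)}\big)\sum_{\ell=0}^{q+j}B^j_\ell\,\bseta^\ell .
\]
Since $\{\ed t\}\cup\{\bseta^\ell\}_{\ell\ge 0}$ is a coframe on $N^{(\infty)}$ (a unipotent change of the coordinate coframe), I can read off the coefficient of the top form $\bseta^{p+q}$: the only index reaching height $p+q$ is $j=p$, so that coefficient is $\big(A^0_p\circ\Psi^{(\infty)}\big)B^p_{q+p}$, and it must vanish because $p+q>0$.

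Now P2 gives $B^p_{q+p}=B_\infty$ (as $p\ge 1$), and P1, P3 give $\rank\big(A^0_p\big)=\rank(A_\infty)>0$ and $\rank(B_\infty)=\rank\big(B^0_q\big)>0$, where $A^0_p$ has size $n_2\times m_1$ and $B_\infty$ has size $m_1\times m_2$. The vanishing $\big(A^0_p\circ\Psi^{(\infty)}\big)B_\infty=0$ forces $\operatorname{im}(B_\infty)\subseteq\ker\big(A^0_p\circ\Psi^{(\infty)}\big)$ at every point, hence $\rank(A_\infty)+\rank(B_\infty)\le m_1$. With $m_1=1$ this contradicts the positivity of both ranks, so no dynamic equivalence with $p,q>0$ can exist, and therefore $p=q=0$. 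I expect the main subtlety to lie not in this computation but in the reduction of the first step: P1--P3 are only claimed for $p,q>0$, so one genuinely needs Proposition~\ref{porq=0Prop} --- and with it the hypothesis that the two systems have the same number of states --- to eliminate the one-sided heights $(p,0)$ and $(0,q)$. The bound $\rank(A_\infty)+\rank(B_\infty)\le m_1$ is exactly the $m=1$ shadow of the rank inequalities governing the general case, and it makes transparent why two controls are the threshold for nontrivial dynamic equivalence.
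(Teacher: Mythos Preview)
Your argument is correct and follows essentially the same route as the paper. The paper derives the key obstruction from the fact that $\bs{A}$ and $\bs{B}$ are mutual inverses (after accounting for the pullbacks), concluding that $A_\infty B_\infty=\bs{0}$, which is impossible when $m=1$ since $A_\infty$, $B_\infty$ are nonzero scalars; you read off the $(0,p+q)$ block of the same product to get $\big(A^0_p\circ\Psi^{(\infty)}\big)B_\infty=0$ and then invoke P3 to convert $\rank\big(A^0_p\big)$ into $\rank(A_\infty)$, arriving at the identical inequality $\rank(A_\infty)+\rank(B_\infty)\le m$ (this is exactly Lemma~\ref{rankLemma}). The only cosmetic difference is which off-diagonal block you inspect; the reduction via Proposition~\ref{porq=0Prop} and the final contradiction for $m=1$ are the same.
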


\begin{Remark} The original result of Cartan takes a different form. In particular, the notion of equivalence considered in~\cite{cartan1914courbes} is \emph{absolute equivalence}. On the other hand, it is a consequence of Corollary~26 in~\cite{Sluis} that two control systems are absolutely equivalent (in an appropriate sense as $t$-independence of maps is assumed) if and only if they are dynamically equivalent. Thus, Theorem~\ref{s=1Theorem} may be regarded as a~version of Cartan's result (see also Corollary~12 in~\cite{Sluis}).
\end{Remark}

It turns out that these two theorems are consequences of the fact that the following matrices are inverses of each other, after taking into account the maps $\Phi^{(\infty)}$ and $\Psi^{(\infty)}$:
\begin{gather}\label{AB}
\bs{A} = \left(\begin{matrix}
A^0_0 &\cdots &A^0_p&\bs{0}&\cdots\\
A^1_0&\cdots&A^1_p&A_\infty&\cdots\\
\vdots&\vdots&\vdots&\vdots&\ddots
\end{matrix}
\right), \qquad
\bs{B} = \left(\begin{matrix}
B^0_0 &\cdots &B^0_q&\bs{0}&\cdots\\
B^1_0&\cdots&B^1_p&B_\infty&\cdots\\
\vdots&\vdots&\vdots&\vdots&\ddots
\end{matrix}
\right).
\end{gather}
In fact, suppose that $(M,\ct_M)$ and $(N,\ct_N)$ have $n_1$, $n_2$ states and $m_1$, $m_2$ controls, respectively. We can arrange that $n_1 = n_2=:n$ by partially prolonging one of the systems, if necessary. Moreover, since partial prolongations preserve the number of controls, this will have no effect on the values of $m_1$ or $m_2$. Then by Proposition~\ref{porq=0Prop}, there are two possible cases: $p = q = 0$ or $p,q>0$. In the former case, we have static equivalence. In the latter case, the form of matrix $\bs{A}$ implies that the $n+rm_2$ linearly independent components of $\bseta^0,\dots ,\bseta^{r}$ are all linear combinations of the $n+(r+p)m_1$ components of $\bsomega^0,\dots ,\bsomega^{r+p}$. When $m_2>m_1$, this is impossible because $n+(r+p)m_1< n+rm_2$ as long as
\begin{gather*}
r> \frac{m_1p}{m_2-m_1}.
\end{gather*}
Theorem \ref{sameSTheorem} follows. Furthermore, when $p,q>0$, $\bs{A}$ and $\bs{B}$ being inverses of each other requires that either $A_\infty B_\infty = \bs{0}$ or $B_\infty A_\infty =\bs{0}$. This is impossible when $m_1 = m_2 = 1$, in which case~$A_\infty$ and~$B_\infty$ are just nonvanishing functions. Theorem~\ref{s=1Theorem} is an immediate consequence.

More generally, we have the

\begin{Lemma}\label{rankLemma} Suppose that $\mathcal{E}$ is a dynamic equivalence of height $(p,q)$ $(p,q >0)$ between two control systems with $m$ controls and not necessarily the same number of states. The associated matrices $A_\infty$ and $B_\infty$ must satisfy
\begin{gather}\label{rankInequality1}
2\le \rank(A_\infty)+\rank(B_\infty) \le m.
\end{gather}
\end{Lemma}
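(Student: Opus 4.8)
The plan is to read the lemma off from the structure already set up above, namely that $\bs{A}$ and $\bs{B}$ are mutually inverse as infinite matrices once one pulls back the entries of $\bs{B}$ along $\Phi^{(\infty)}$ (a pointwise isomorphism, so no rank is affected by this). The argument then splits into the two inequalities.

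The lower bound needs no work: Property~P3 gives $\rank(A_\infty) = \rank\big(A^0_p\big) > 0$ and $\rank(B_\infty) = \rank\big(B^0_q\big) > 0$, and being positive integers each is $\ge 1$, so their sum is $\ge 2$.

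For the upper bound, the one fact I need is that $A_\infty B_\infty = \bs{0}$ or $B_\infty A_\infty = \bs{0}$, which was already recorded above as a consequence of $\bs A \bs B = \bs B \bs A = {\rm Id}$. (To see it directly: both systems having $m$ controls makes $A_\infty$, $B_\infty$ square of size $m$; comparing the $(i,i+p+q)$ block on the two sides of $\bs A\bs B = {\rm Id}$ for $i$ large, the banded supports from P1 collapse the left-hand side to the single term $A^i_{i+p}B^{i+p}_{i+p+q}$, which equals $A_\infty B_\infty$ by P2, while $i+p+q>i$ because $p+q>0$, so this off-diagonal block vanishes.) Assuming $A_\infty B_\infty = \bs{0}$ — the other case being entirely symmetric — we get $\operatorname{im}(B_\infty)\subseteq\ker(A_\infty)$, hence $\rank(B_\infty)\le m - \rank(A_\infty)$, i.e.\ $\rank(A_\infty)+\rank(B_\infty)\le m$, which is~\eqref{rankInequality1}.

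The only delicate point is the index bookkeeping behind $A_\infty B_\infty=\bs{0}$: one must check that exactly one summand survives in that off-diagonal block and that $i$ is large enough for the constancy statement P2 to apply. Since we are entitled to quote the already-established dichotomy ``$A_\infty B_\infty=\bs{0}$ or $B_\infty A_\infty=\bs{0}$,'' this obstacle is in fact already disposed of, and all that remains is the elementary rank estimate above.
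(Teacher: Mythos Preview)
Your proof is correct and follows the same route as the paper's: the paper's one-line argument is simply ``$p,q>0$ implies $A_\infty B_\infty = \bs{0}$,'' with the lower bound implicit from Property~P3 and the upper bound from the standard rank estimate $\rank(B_\infty)\le\dim\ker(A_\infty)=m-\rank(A_\infty)$. Your parenthetical block computation in fact yields $A_\infty B_\infty=\bs{0}$ outright (not merely the dichotomy), so the case split you frame afterward is unnecessary---but this is cosmetic, not a gap.
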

\begin{proof} This is because $p,q >0$ implies $A_\infty B_\infty = \bs{0}$.\end{proof}

\subsection[The infinite permutation matrix $\mathcal{S}$]{The infinite permutation matrix $\boldsymbol{\mathcal{S}}$}

 Let $(M,\ct_M)$, $(N,\ct_N)$, $\bsomega^i$, $\bseta^i$, $\bs{A}$, $\bs{B}$ be as above. In~\cite{stackpole}, it is proved that there exist transformations
 \begin{gather} \label{ABtransformation}
\left(\begin{matrix}
\bar\bsomega^0\\
\bar\bsomega^1\\
\vdots
\end{matrix}
\right) = \left(\begin{matrix}
\bs{g}^0_0&\bs{0}&\dots\\
\bs{g}^1_0&\bs{g}^1_1&\dots\\
\vdots&\vdots&\ddots
\end{matrix}
\right)\left(\begin{matrix}
\bsomega^0\\
\bsomega^1\\
\vdots
\end{matrix}
\right), \qquad
\left(\begin{matrix}
\bar\bseta^0\\
\bar\bseta^1\\
\vdots
\end{matrix}
\right) = \left(\begin{matrix}
\bs{h}^0_0&\bs{0}&\dots\\
\bs{h}^1_0&\bs{h}^1_1&\dots\\
\vdots&\vdots&\ddots
\end{matrix}
\right)\left(\begin{matrix}
\bseta^0\\
\bseta^1\\
\vdots
\end{matrix}
\right),
\end{gather}
where $\bs{g}^i_i = \bs{g}^{i+1}_{i+1}$, $\bs{h}^i_i = \bs{h}^{i+1}_{i+1}$ for all $i\ge 1$, such that, pointwise,
\begin{enumerate}\itemsep=0pt
\item[C1.] $ \begin{cases}
\spn\big\{\bsomega^0,\dots ,\bsomega^k\big\} = \spn\big\{\bar\bsomega^0,\dots ,\bar\bsomega^k\big\},\\
\spn\big\{\bseta^0,\dots ,\bseta^k\big\} = \spn\big\{\bar\bseta^0,\dots ,\bar\bseta^k\big\},
\end{cases} k\ge 0$;
\item[C2.] $ \begin{cases}
\ed\bar\bsomega^\ell = -\bar\bsomega^{\ell+1}\W \ed t \mod \bar\bsomega^0,\dots ,\bar\bsomega^\ell,\\
\ed\bar\bseta^\ell = -\bar\bseta^{\ell+1}\W \ed t \mod \bar\bseta^0,\dots ,\bar\bseta^\ell,
\end{cases} \ell \ge 1$;
\item[C3.] ${\Phi^{(\infty)}}^*\bar\bseta = \bar{\bs{A}}{\bar\bsomega}$, ${\Psi^{(\infty)}}^*\bar\bsomega = \bar{\bs{B}}{\bar\bseta}$, where $\bar{\bs{A}}$, $\bar{\bs{B}}$ take the same form as~\eqref{AB} (in particular, all~$\bar{A}^k_{p+k}$ are equal for $k\ge 1$, etc.), and both $\bar{\bs{A}}$ and $\bar{\bs{B}}$ are \emph{infinite permutation matrices}, that is, each row/column of $\bar{\bs{A}}$ and $\bar{\bs{B}}$ contains a single $1$ with the rest of the entries being all $0$.
\end{enumerate}

In addition, we have

\begin{Proposition}\label{SProp} Assume that $p,q >0$. The matrices $\bar{\bs{A}}$ and $\bar{{\bs B}}$ satisfy $\bar{\bs{A}} = \bar{{\bs B}}^T.$ In particular,
 the infinite permutation matrix $\bar{\bs{A}}$ must be of the form
 \begin{gather} \label{Smatrix}
 \mathcal{S}:= \bar{\bs{A}} = \left(\begin{matrix}
\bar{A}^0_0 &\bar{A}^0_1&\cdots &\bar{A}^0_p&\bs{0}&\cdots\\
\bar{A}^1_0&\bar{A}^1_1&\cdots&\bar{A}^1_p&\bar{A}_\infty&\ddots\\
\vdots&\vdots&\ddots&\vdots&&\ddots\\
\bar{A}^q_0&\bar{A}^q_1&\cdots&\bar{A}^q_p\\
\bs{0}&\bar{B}_\infty^T&&\\
\vdots&\ddots&\ddots
\end{matrix}
\right) ;\end{gather}
in other words, $\bar{A}^{q+k}_k = \bar{B}_\infty^T$ $(k\ge 1)$, and $\bar{A}^{q+\ell}_k = \bs{0}$ for all $\ell >k$.
\end{Proposition}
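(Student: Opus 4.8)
The plan is to use the relation that $\bs{A}$ and $\bs{B}$ (equivalently $\bar{\bs{A}}$ and $\bar{\bs{B}}$) are inverses of one another after incorporating $\Phi^{(\infty)}$ and $\Psi^{(\infty)}$, and to exploit the block-triangular structure in \eqref{ABtransformation} together with the ``stabilization'' conditions $\bs{g}^i_i = \bs{g}^{i+1}_{i+1}$, $\bs{h}^i_i = \bs{h}^{i+1}_{i+1}$ and $\bar{A}^k_{p+k}$, $\bar{B}^k_{q+k}$ being eventually constant. First I would pull back the identity ${\Psi^{(\infty)}}^*{\Phi^{(\infty)}}^* = {\rm Id}$ applied to $\bar\bseta$: from C3 we get $\bar\bseta = {\Psi^{(\infty)}}^*\big(\bar{\bs{A}}\bar\bsomega\big) = \bar{\bs{A}}\,{\Psi^{(\infty)}}^*\bar\bsomega = \bar{\bs{A}}\,\bar{\bs{B}}\,\bar\bseta$, and similarly $\bar\bsomega = \bar{\bs{B}}\,\bar{\bs{A}}\,\bar\bsomega$. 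Since the components of $\bar\bseta$ (resp.\ $\bar\bsomega$) appearing here are pointwise linearly independent over the relevant $M^{(\infty)}$, this forces $\bar{\bs{A}}\bar{\bs{B}} = {\rm Id}$ and $\bar{\bs{B}}\bar{\bs{A}} = {\rm Id}$ as (formal, row-finite) infinite matrices. So $\bar{\bs{A}}$ and $\bar{\bs{B}}$ are mutually inverse infinite permutation matrices, hence $\bar{\bs{B}} = \bar{\bs{A}}^{-1} = \bar{\bs{A}}^T$, which is exactly $\bar{\bs{A}} = \bar{\bs{B}}^T$.

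The remaining content is the shape assertion \eqref{Smatrix}: that the nonzero blocks of $\bar{\bs{A}}$ occupy precisely the band described, i.e.\ $\bar{A}^{q+\ell}_k = \bs{0}$ for $\ell > k$, and $\bar{A}^{q+k}_k = \bar{B}_\infty^T$ for $k \ge 1$. For the vanishing, I would transpose the already-known band structure of $\bar{\bs{B}}$: by the analogue of \eqref{om2eta}--\eqref{eta2om} in the barred variables, $\bar{\bs{B}}$ has $\bar{B}^k_j = \bs{0}$ for $j > q + k$, i.e.\ its $(k,j)$ block vanishes unless $j \le q+k$. Taking the transpose $\bar{\bs{A}} = \bar{\bs{B}}^T$ translates ``$j > q+k \Rightarrow \bar{B}^k_j = \bs{0}$'' into ``$\bar{A}^j_k = (\bar{B}^k_j)^T = \bs{0}$ whenever $j > q+k$'', i.e.\ writing $j = q + \ell$ with $\ell > k$ gives $\bar{A}^{q+\ell}_k = \bs{0}$, which is the claimed lower band-edge; the upper band-edge $\bar{A}^k_j = \bs{0}$ for $j > p+k$ is just the barred version of \eqref{om2eta}. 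For the identification $\bar{A}^{q+k}_k = \bar{B}^k_{q+k}{}^T = \bar{B}_\infty^T$ when $k \ge 1$, I would invoke P2 in the barred form (part of C3): all $\bar{B}^k_{q+k}$ coincide for $k\ge1$ and equal $\bar{B}_\infty$, so their transposes sit on the indicated sub-diagonal of $\bar{\bs{A}}$; and these are nonzero, filling out the band, by P3.

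I expect the main obstacle to be the first step — rigorously justifying $\bar{\bs{A}}\bar{\bs{B}} = {\rm Id}$ from the composed pullback. One has to be careful that the pullback of a row-finite infinite matrix times $\bar\bsomega$ behaves as expected (each $\bar\bseta^k$ is hit by only finitely many $\bar\bsomega^j$, so the compositions make sense termwise), and that ``equal $1$-forms'' really does imply ``equal coefficient matrices,'' which requires the pointwise linear independence of the $1$-forms $\bar\bsomega^0, \bar\bsomega^1, \dots$ (and of $\bar\bseta^0, \bar\bseta^1, \dots$) guaranteed by C1 and the nondegeneracy built into Definition~\ref{DefDynSys}. This is essentially the same ``$\bs{A}$ and $\bs{B}$ are inverses'' computation that the excerpt already attributes to \cite{stackpole} for the case $n_1 = n_2$, $m_1 = m_2$; since the barred forms satisfy identical structural properties (C1--C3), the argument carries over verbatim. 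Once that is in hand, everything else is bookkeeping with transposes and the stabilization identities, so the proof is short.
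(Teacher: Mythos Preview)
Your proposal is correct and follows essentially the same route as the paper: first establish $\bar{\bs{A}}\bar{\bs{B}} = \bar{\bs{B}}\bar{\bs{A}} = {\rm Id}$ from the composed pullbacks, then use that both are infinite permutation matrices (C3) to conclude $\bar{\bs{A}} = \bar{\bs{B}}^T$, and finally read off the block structure of $\bar{\bs{A}}$ by transposing the known band shape of $\bar{\bs{B}}$. The paper's proof is terser but identical in substance; your only extra care---noting that the $0$--$1$ entries of $\bar{\bs{A}}$ are unaffected by pullback, and that linear independence of the $\bar\bsomega^k$, $\bar\bseta^k$ lets you equate coefficient matrices---is exactly what is implicit in the paper's one-line appeal to ``$\bar{\bs{A}}\bar{\bs{B}} = \diag(1,1,\dots)$ and Property~C3.''
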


\begin{proof} The fact $\bar{\bs{A}} = \bar{{\bs B}}^T$ follows from $\bar{\bs{A}}\bar{{\bs B}} = \bar{\bs{B}}\bar{\bs{A}} = \diag(1,1,\dots )$ and Property~C3 above. Consequently, $\bar{A}^{q+k}_k = \big(\bar{B}^k_{q+k}\big)^T =\bar{B}_\infty^T $ for $k\ge 1$. For a similar reason, $\bar{A}^{q+\ell}_k = \bs{0}$ for all $\ell >k$.\end{proof}

\begin{Definition}\label{rankMatrixDef} Let $\mathcal{S}$, taking the form of~\eqref{Smatrix}, be an infinite permutation matrix obtained from a dynamic equivalence with height $(p,q)$ $(p,q>0)$ between two control systems. Let $r^i_j = \rank\big(\bar{A}^i_j\big)$. We define the \emph{rank matrix} associated to~$\mathcal{S}$ to be
\begin{gather*}
\mathcal{R}(\mathcal{S}): = \big(r^i_j\big).
\end{gather*}
\end{Definition}

Given a dynamic equivalence, an associated matrix $\mathcal{S}$ may depend on the choice of the transformations $\bar\bsomega = \bs{G}\bsomega$ and $\bar\bseta = \bs{H}\bseta$.
However, we have

\begin{Proposition}\label{rankProp}
If $\mathcal{S}_1$ and $\mathcal{S}_2$ are two infinite permutation matrices obtained from the
same dynamic equivalence, then their rank matrices satisfy
\begin{gather*}
\mathcal{R}(\mathcal{S}_1) = \mathcal{R}(\mathcal{S}_2).
\end{gather*}
\end{Proposition}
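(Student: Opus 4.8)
The plan is to recover each block rank $r^i_j = \rank\big(\bar A^i_j\big)$ from data attached to the dynamic equivalence itself, so that it does not depend on the transformations $\bar\bsomega = \bs{G}\bsomega$, $\bar\bseta = \bs{H}\bseta$ that produced $\mathcal{S}$. First I would introduce, for $k\ge 0$, the filtrations
\begin{gather*}
\mathcal{F}^M_k := \spn\big\{\bsomega^0,\dots ,\bsomega^k\big\},\qquad \mathcal{F}^N_k := \spn\big\{\bseta^0,\dots ,\bseta^k\big\}
\end{gather*}
(pointwise spans on $M^{(\infty)}$ and $N^{(\infty)}$, taken at a generic point). These are the pullbacks of the contact systems of $M^{(k)}$ and $N^{(k)}$, hence intrinsic; and by C1 they are equally well spanned by the barred coframings. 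Since the dynamic equivalence — and hence $\Phi^{(\infty)}$ — is fixed, the integers
\begin{gather*}
c_{ij} := \dim\big({\Phi^{(\infty)}}^*\mathcal{F}^N_i\cap\mathcal{F}^M_j\big),\qquad i,j\ge 0,
\end{gather*}
are manifestly the same whether read off from $\mathcal{S}_1$ or from $\mathcal{S}_2$.

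The heart of the argument will be to prove the identity
\begin{gather*}
c_{ij} = \sum_{a=0}^{i}\sum_{b=0}^{j} r^a_b
\end{gather*}
for any associated matrix $\mathcal{S}$. The idea is that the scalar components of the $\bar\bsomega^k$ $(k\ge 0)$ form a basis of the ambient space of $1$-forms (they are independent because the $\bsomega^k$-components are and $\bs{G}$ is invertible), that ${\Phi^{(\infty)}}^*$ is injective (since ${\Psi^{(\infty)}}^*{\Phi^{(\infty)}}^* = {\rm Id}$), and that, because $\mathcal{S}$ is an infinite permutation matrix, each scalar component of ${\Phi^{(\infty)}}^*\bar\bseta^a$ is a single one of these basis covectors, with distinct scalar rows using distinct ones. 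I would then identify ${\Phi^{(\infty)}}^*\mathcal{F}^N_i$ as the coordinate subspace spanned by the basis covectors marked by the $1$'s of $\mathcal{S}$ in block-rows $\le i$, identify $\mathcal{F}^M_j$ as the coordinate subspace spanned by the components of $\bar\bsomega^0,\dots ,\bar\bsomega^j$, and conclude that $c_{ij}$ counts the $1$'s of $\mathcal{S}$ in the block-corner $\{a\le i\}\times\{b\le j\}$. Since the $1$'s in any block $\bar A^a_b$ of a permutation matrix lie in distinct rows and columns, that block carries exactly $\rank\big(\bar A^a_b\big) = r^a_b$ of them, and summing over the corner gives the identity.

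Granting this, I would finish with a double finite difference: with the convention $c_{ij} := 0$ for $i<0$ or $j<0$, one has $r^i_j = c_{ij} - c_{i-1,j} - c_{i,j-1} + c_{i-1,j-1}$, whose right-hand side is intrinsic; hence $r^i_j$ agrees for $\mathcal{S}_1$ and $\mathcal{S}_2$, and $\mathcal{R}(\mathcal{S}_1) = \mathcal{R}(\mathcal{S}_2)$. I expect the combinatorial identification $c_{ij} = \sum_{a\le i,\,b\le j} r^a_b$ to be the only real work — in particular the bookkeeping that the rank of a submatrix of a permutation matrix equals its number of $1$'s and that the intersection of the two coordinate subspaces above is exactly the one claimed; everything else (intrinsicality of the filtrations, injectivity of ${\Phi^{(\infty)}}^*$, the finite difference) is formal, modulo the standing assumption that we work near a generic point where the various dimensions are locally constant.
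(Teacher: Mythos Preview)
Your argument is correct and takes a cleaner route than the paper's. The paper proceeds by writing $\mathcal{S}_1\bs{K} = \bs{L}\mathcal{S}_2$ for block lower-triangular change-of-coframe matrices $\bs{K}$, $\bs{L}$, then compares row and column ranks of nested upper-right corner blocks of the common product to deduce $u^i_j = v^i_j$ via a descending induction on the column index, handled one block-row at a time. You instead isolate the intrinsic quantities $c_{ij} = \dim\big({\Phi^{(\infty)}}^*\mathcal{F}^N_i\cap\mathcal{F}^M_j\big)$, identify each as the number of $1$'s in the $(i,j)$-corner of $\mathcal{S}$ (hence $c_{ij} = \sum_{a\le i,\,b\le j} r^a_b$), and recover $r^i_j$ by a single second-order finite difference. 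Both proofs ultimately exploit the same invariant --- the corner partial sums of the ranks --- but your packaging avoids the explicit block-matrix product and the inductive bookkeeping, at the cost of a small abstraction (working with coordinate subspaces in the $\bar\bsomega$-basis). The paper's version, in turn, makes the role of the block-triangular transition matrices explicit and may read more concretely to someone less comfortable with the infinite-prolongation picture.
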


\begin{proof}Suppose that the underlying dynamic equivalence has height $(p,q)$ $(p,q >0)$. One can write $\mathcal{S}_1$ and $\mathcal{S}_2$ in block forms
\begin{gather*}
\mathcal{S}_1 = \left(\begin{matrix}
U^0_0& \cdots & U^0_p & \bs{0}&\cdots\\
\vdots & \ddots & \vdots&&\ddots \\
U^q_0&\cdots & U^q_p \\
\bs{0}&&&\ddots\\
\vdots&\ddots
\end{matrix}
\right), \qquad \mathcal{S}_2 = \left(\begin{matrix}
V^0_0& \cdots & V^0_p & \bs{0}&\cdots\\
\vdots & \ddots & \vdots&&\ddots \\
V^q_0&\cdots & V^q_p \\
\bs{0}&&&\ddots\\
\vdots&\ddots
\end{matrix}
\right).
\end{gather*}
Let $u^i_j := \rank\big(U^i_j\big)$ and $v^i_j := \rank\big(V^i_j\big)$. Since $\mathcal{S}_1$, $\mathcal{S}_2$ arise from the same dynamic equivalence, there exist invertible block lower triangular matrices\footnote{Respectively, the block sizes of $\bs{K}$ and $\bs{L}$ are the same as those of $\bs{G}$ and $\bs{H}$.}
\begin{gather*}
\bs{K} = \left(\begin{matrix}
\bs{k}^0_0&\bs{0}&\dots\\
\bs{k}^1_0&\bs{k}^1_1&\dots\\
\vdots&\vdots&\ddots
\end{matrix}
\right), \qquad
\bs{L} = \left(\begin{matrix}
\bs{\ell}^0_0&\bs{0}&\dots\\
\bs{\ell}^1_0&\bs{\ell}^1_1&\dots\\
\vdots&\vdots&\ddots
\end{matrix}
\right),
\end{gather*}
where $\bs{k}^i_i = \bs{k}^{i+1}_{i+1}$, $\bs{\ell}^i_i = \bs{\ell}^{i+1}_{i+1}$ for all $i\ge 1$, such that
\begin{gather*}
\mathcal{S}_1\bs{K} = \bs{L}\mathcal{S}_2 = \left(\begin{matrix}
W^0_0& \cdots & W^0_p & \bs{0}&\cdots&\cdots&\cdots&\cdots\\
W^1_0& \cdots & \cdots& W^1_{p+1}&\ddots\\
\vdots & & &&\ddots&\ddots \\
W^q_0&\cdots & \cdots&\cdots&\cdots&W^q_{p+q}&\bs{0}&\cdots \\
\vdots&&&&&&\ddots&\ddots
\end{matrix}
\right).
\end{gather*}
As results of the forms of $\bs{K}$ and $\bs{L}$, we have
\begin{enumerate}\itemsep=0pt
\item[(i)] $u^i_{p+i} = v^i_{p+i}$ for all $i\ge 0$. This is because $W^i_{p+i} = U^i_{p+i}\bs{k}^{p+i}_{p+i} = \bs{\ell}^i_i V^i_{p+i}$, where both $\bs{k}^{p+i}_{p+i}$ and~$\bs{\ell}^i_i$ are invertible.
\item[(ii)] $u^0_{i} = v^0_i$ for all $0\le i<p$. To see why this is true, consider the submatrix $\big(W^0_i W^0_{i+1}\cdots W^0_p\big)$. For each $i<p$, its row rank equals to $v^0_i+v^0_{i+1}+\cdots+ v^0_p$, which must be equal to its column rank $u^0_i+ u^0_{i+1}+\cdots + u^0_p$.
\item[(iii)] $u^1_i = v^1_i$ for all $0\le i<p+1$. To see this, consider the submatrix
$\left(\begin{smallmatrix} W^0_i &\cdots& W^0_p &\bs{0} \\
W^1_i&\cdots &W^1_p&W^1_{p+1}
 \end{smallmatrix}\right)$. Its row rank $\big(v^0_i+\cdots+v^0_p\big)+\big(v^1_i+\cdots + v^1_{p+1}\big)$ must be equal to its column rank $\big(u^0_i+\cdots+u^0_p\big)+\big(u^1_i+\cdots +u^1_{p+1}\big)$. Let $i$ decrease from $p$ and use~$(ii)$. The desired result follows.

\item[(iv)] $u^j_i = v^j_i$ for $j\ge 2$, $0\le i< p+j$. This can be verified by a similar comparison between the column and row ranks of the submatrices
\begin{gather*}
\left(\begin{matrix} W^0_i &\cdots& W^0_p &\bs{0} &\cdots&\bs{0}\\
W^1_i&\cdots &W^1_p&W^1_{p+1}&\ddots&\vdots\\
\vdots&&&&\ddots&\bs{0}\\
W^j_i&\cdots &W^j_p&W^j_{p+1}&\cdots&W^j_{p+j}
 \end{matrix}\right), \qquad i<p+j.\end{gather*}
\end{enumerate}
This completes the proof.\end{proof}

As a consequence of Proposition~\ref{rankProp}, we have

\begin{Corollary}If $\mathcal{S}_1$ and $\mathcal{S}_2$ are two infinite permutation matrices obtained from a~dynamic equivalence with height $(p,q)$ $(p,q>0)$, then there exist block diagonal matrices
\begin{gather*}
\bs{K} = \diag\big(\bs{k}^i_i\big)_{i\ge 0}, \qquad \bs{L} = \diag\big(\bs{\ell}^i_i\big)_{i\ge 0},
\end{gather*}
where $\bs{k}^i_i$ and $\bs{\ell}^i_i$ are usual permutation matrices of appropriate sizes,\footnote{That is, the sizes of $\bs{k}^i_i$ and $\bs{\ell}^i_i$ are consistent with the product of block matrices $\bs{L}\mathcal{S}_2\bs{K}$.} such that
\begin{gather*}
\mathcal{S}_1 = \bs{L}\mathcal{S}_2 \bs{K}.
\end{gather*}
\end{Corollary}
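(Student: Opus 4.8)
The plan is to deduce the Corollary from Proposition~\ref{rankProp} alone, by a short combinatorial argument; the block lower triangular matrices $\bs{K},\bs{L}$ appearing in the proof of that proposition are not reused, and are in fact replaced by genuine block permutations. The first step is to translate everything into the language of bijections. An infinite permutation matrix $\mathcal{S}$ is the matrix of a bijection $\sigma$ of its (countable) index set, and this index set carries two partitions into blocks: the column blocks, of sizes $n_1,m,m,\dots$, coming from $\bsomega^0,\bsomega^1,\dots$, and the row blocks, of sizes $n_2,m,m,\dots$, coming from $\bseta^0,\bseta^1,\dots$. Since every row and every column of $\mathcal{S}$ contains exactly one $1$, each block $\bar A^i_j$ is a $0$-$1$ matrix with at most one $1$ in each row and each column; hence $\rank\big(\bar A^i_j\big)$ equals the number of $1$'s in it, which is the number of indices of column block $j$ that $\sigma$ carries into row block $i$. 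Because $\mathcal{S}_1$ and $\mathcal{S}_2$ arise from the same dynamic equivalence they have the same block sizes, and by Proposition~\ref{rankProp} the same rank matrix; writing $r^i_j$ for the common value, column block $j$ is decomposed by the underlying bijection $\sigma_1$ of $\mathcal{S}_1$ into pieces of sizes $r^i_j$ $(i\ge 0)$, and by $\sigma_2$ into pieces of the same sizes, and in each case these pieces exhaust column block $j$ since the column sums of a permutation matrix recover the block sizes.

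The second step is the construction. For each ordered pair $(i,j)$, choose any bijection between the size-$r^i_j$ piece of column block $j$ cut out by $\sigma_1$ and the corresponding piece cut out by $\sigma_2$; assembling over $i$ gives, for each $j$, a permutation $\bs{k}^j_j$ of column block $j$, and we set $\bs{K}:=\diag\big(\bs{k}^j_j\big)_{j\ge 0}$. Transporting these bijections by $\sigma_1$ and $\sigma_2$ identifies the pieces of row block $i$ cut out, via preimages, by $\sigma_1$ and by $\sigma_2$; assembling over $j$ yields a permutation $\bs{\ell}^i_i$ of row block $i$, and we set $\bs{L}:=\diag\big(\bs{\ell}^i_i\big)_{i\ge 0}$. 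By construction $\bs{K}$ and $\bs{L}$ intertwine $\sigma_2$ with $\sigma_1$ one block at a time, that is, $\mathcal{S}_1=\bs{L}\,\mathcal{S}_2\,\bs{K}$; and the diagonal blocks have exactly the sizes forced by the block structure ($\bs{k}^0_0$ is $n_1\times n_1$, $\bs{\ell}^0_0$ is $n_2\times n_2$, all others $m\times m$), which is the "appropriate sizes" requirement.

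I expect no genuine obstacle; the points needing care are all bookkeeping. One must verify that the $\sigma_1$- and $\sigma_2$-decompositions of a fixed block involve the same multiset of part sizes --- this is precisely where Proposition~\ref{rankProp} enters --- and that, together with the row/column sum identities for permutation matrices, these parts fill the block with neither overlap nor deficit, so that the chosen bijections glue into honest permutations of each block. One must also keep track of which side each factor multiplies on, so that $\bs{K}$ acts on the $\bsomega$-blocks and $\bs{L}$ on the $\bseta$-blocks and no spurious inverse enters the final identity. Finally, although $\mathcal{S}_1,\mathcal{S}_2$ are infinite, the band structure of $\mathcal{S}$ exhibited in~\eqref{Smatrix} (column block $j$ meets only row blocks $j-p,\dots,j+q$) makes every bijection involved locally finite, so the block-by-block construction is well defined and passes to the infinite matrices without any convergence issue. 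In essence, the Corollary is the elementary statement that a permutation matrix, modulo independent permutations of the rows inside each block row and of the columns inside each block column, is classified by its rank matrix --- and Proposition~\ref{rankProp} guarantees that this rank matrix is an invariant of the dynamic equivalence.
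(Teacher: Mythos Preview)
Your proposal is correct and follows essentially the same approach as the paper: both arguments rest on the two facts that $\mathcal{S}_1$, $\mathcal{S}_2$ are permutation matrices and, by Proposition~\ref{rankProp}, have the same rank in each corresponding block. The paper's proof simply states these two facts and leaves the routine combinatorial construction to the reader, whereas you have written that construction out in detail; no substantively different idea is involved.
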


\begin{proof} This is because $\mathcal{S}_1$ and $\mathcal{S}_2$ (i) are permutation matrices; and (ii) have the same rank in each pair of corresponding blocks.\end{proof}

\begin{Corollary}Let $\mathcal{S}$ be an infinite permutation matrix obtained from a dynamic equivalence of height $(p,q)$ $(p,q>0)$. Using the notations in~\eqref{AB}, the associated rank matrix $\mathcal{R}(\mathcal{S}) = \big(r^i_j\big)$ satisfies
\begin{gather*}
r^k_{p+k} =\rank(A_\infty), \qquad r^{q+k}_k = \rank(B_\infty), \qquad k\ge 0.
\end{gather*}
\end{Corollary}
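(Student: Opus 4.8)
The plan is to track how $\mathcal{S}=\bar{\bs A}$ is built from the matrix $\bs A$ of~\eqref{AB} and then read off the ranks of the relevant blocks. Writing the coframe changes in~\eqref{ABtransformation} as $\bar\bsomega=\bs G\bsomega$ and $\bar\bseta=\bs H\bseta$, I would pull the second relation back by $\Phi^{(\infty)}$ and compare with C3 to get $\bar{\bs A}=\widetilde{\bs H}\,\bs A\,\bs G^{-1}$, where $\widetilde{\bs H}:={\Phi^{(\infty)}}^*\bs H$. Since $\Phi^{(\infty)}$ is a genuine smooth map, $\widetilde{\bs H}$ is again block lower triangular with invertible diagonal blocks $\widetilde{\bs h}^i_i$ satisfying $\widetilde{\bs h}^i_i=\widetilde{\bs h}^{i+1}_{i+1}$ for $i\ge1$; and $\bs G^{-1}$ is block lower triangular with diagonal blocks $(\bs g^i_i)^{-1}$. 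Symmetrically, $\bar{\bs B}=\widetilde{\bs G}\,\bs B\,\bs H^{-1}$ with $\widetilde{\bs G}:={\Psi^{(\infty)}}^*\bs G$ having the analogous properties.

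Next I would isolate the block $\bar A^k_{p+k}$ of $\bar{\bs A}=\widetilde{\bs H}\bs A\bs G^{-1}$. Expanding $\bar A^k_{p+k}=\sum_{a,b}\widetilde{\bs h}^k_a A^a_b(\bs G^{-1})^b_{p+k}$, lower-triangularity of $\widetilde{\bs H}$ forces $a\le k$, that of $\bs G^{-1}$ forces $b\ge p+k$, and the band shape from P1 ($A^a_b$ vanishes unless $b\le p+a$) forces $b\le p+a$; these three conditions leave only $a=k$, $b=p+k$, so $\bar A^k_{p+k}=\widetilde{\bs h}^k_k\,A^k_{p+k}\,(\bs g^{p+k}_{p+k})^{-1}$. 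As $\widetilde{\bs h}^k_k$ and $\bs g^{p+k}_{p+k}$ are invertible, $r^k_{p+k}=\rank\bigl(A^k_{p+k}\bigr)$, which by P2 equals $\rank(A_\infty)$ for $k\ge1$ and by P3 equals $\rank\bigl(A^0_p\bigr)=\rank(A_\infty)$ for $k=0$. Running the identical computation on $\bar{\bs B}=\widetilde{\bs G}\bs B\bs H^{-1}$ gives $\bar B^k_{q+k}=\widetilde{\bs g}^k_k\,B^k_{q+k}\,(\bs h^{q+k}_{q+k})^{-1}$, hence $\rank\bigl(\bar B^k_{q+k}\bigr)=\rank\bigl(B^k_{q+k}\bigr)$, which is $\rank(B_\infty)$ for every $k\ge0$ (again P2 for $k\ge1$ and P3 for $k=0$).

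Finally I would transfer the information about the $B$-blocks to the blocks $\bar A^{q+k}_k$ of $\mathcal{S}$ via Proposition~\ref{SProp}: it gives $\bar A^{q+k}_k=\bar B_\infty^T=\bigl(\bar B^k_{q+k}\bigr)^T$ for $k\ge1$, and the identity $\bar{\bs A}=\bar{\bs B}^T$ from the same proposition gives $\bar A^q_0=\bigl(\bar B^0_q\bigr)^T$; since transposition preserves rank, $r^{q+k}_k=\rank\bigl(\bar B^k_{q+k}\bigr)=\rank(B_\infty)$ for all $k\ge0$, completing the proof. I expect the only real bookkeeping hurdle to be checking that the block-triangular shape, invertibility of diagonal blocks, and stabilization relations all survive the pullbacks $\Phi^{(\infty)}$, $\Psi^{(\infty)}$ — but this is immediate, since those pullbacks are precomposition with smooth maps and hence preserve pointwise rank and every algebraic relation among entries. (Alternatively, the equality $r^k_{p+k}=\rank\bigl(A^k_{p+k}\bigr)$ follows from part~(i) in the proof of Proposition~\ref{rankProp} applied with $\mathcal{S}_1=\bar{\bs A}$ and $\mathcal{S}_2=\bs A$, since $\bar{\bs A}\bs G=\widetilde{\bs H}\bs A$.) The remaining steps are the routine band-structure calculation above.
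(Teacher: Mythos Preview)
Your proof is correct and follows the same idea as the paper's: the coframe changes~\eqref{ABtransformation} act on the ``last'' block $A^k_{p+k}$ (resp.\ $B^k_{q+k}$) by left and right multiplication by invertible matrices, so the ranks are preserved, and then P2, P3 and Proposition~\ref{SProp} finish the job. The paper's proof compresses all of this into one sentence (``a transformation~\eqref{ABtransformation} preserves the ranks of $A_\infty$ and $B_\infty$; then use Property~P3''), whereas you have carefully unpacked the block-triangular/band-structure computation that makes this assertion precise.
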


\begin{proof} To see why this is true, first notice that a transformation~\eqref{ABtransformation} preserves the ranks of~$A_\infty$ and $B_\infty$; then use Property~P3.\end{proof}

\begin{Remark}A dynamic equivalence may be viewed as an invertible differential operator~\cite{ABMP94,Chetverikov17,Levine11}. In particular, one may, according to~\cite{Chetverikov17}, define the associated \emph{$d$-scheme of squares}. It is interesting to observe how the relations (3)--(5) in~\cite{Chetverikov17} resemble the conditions that $r^i_j$ satisfy for our rank matrix, but we will not pursue this relation further in the current article.
\end{Remark}

\section{The height of a dynamic equivalence}

Given two control systems $(M,\ct_M)$ ($n_1$ states, $m$ controls) and $(N,\ct_N)$ ($n_2$ states, $m$ controls) that are dynamically equivalent, it is interesting to ask: \emph{What are the possible heights of a~dynamic equivalence?} A particular instance is Theorem~\ref{s=1Theorem}, which tells us that the height suggests how control systems with $m = 1$ and $m>1$ are qualitatively different. The current section will present some new results in this direction.

\subsection{Some rank equalities and inequalities}

Throughout this section, let $(M,\ct_M)$ and $(N,\ct_N)$ be as above. Suppose that a dynamic equivalence between them has height $(p,q)$ with $p,q >0$. Let $\mathcal{S}$ be an associated infinite permutation matrix (equation~\eqref{Smatrix}), obtained from a choice of coframes $\big(\bar\bsomega^0,\bar\bsomega^1,\dots \big)$ and $\big(\bar\bseta^0,\bar\bseta^1,\dots \big)$. Let $\mathcal{R}(\mathcal{S}) = \big(r^i_j\big)$ be the corresponding rank matrix (Definition~\ref{rankMatrixDef}).

\begin{Proposition}\label{rankEquality} $r^i_j$ satisfy the following equalities
\begin{gather}\label{rankEqualities}
 \sum_{i\ge 0} r^i_0 = n_1, \qquad \sum_{j\ge 0}r^0_j = n_2, \qquad \sum_{i\ge 0} r^i_k = \sum_{j\ge 0} r^k_j = m, \qquad k = 1,2,\dots.
\end{gather}
\end{Proposition}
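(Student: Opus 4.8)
The plan is to read off each entry $r^i_j$ of $\mathcal R(\mathcal S)$ combinatorially, exploiting that $\mathcal S=\bar{\bs A}$ is an infinite permutation matrix, and then simply to count the $1$'s of $\mathcal S$ block by block.

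First I would pin down the block sizes. Since $\bsomega^0=\ed\x-\f(\x,\u)\,\ed t$ has $n_1$ components, $\bsomega^k=\ed\u^{(k-1)}-\u^{(k)}\,\ed t$ has $m$ components for $k\ge 1$, and likewise $\bseta^0$, $\bseta^k$ have $n_2$ and $m$ components (here Theorem~\ref{sameSTheorem} is used, so that both systems really have $m$ controls), and since the coframe changes in \eqref{ABtransformation} have invertible square diagonal blocks of these same sizes, the transformed forms $\bar\bsomega^k$, $\bar\bseta^k$ have the same numbers of components as $\bsomega^k$, $\bseta^k$. Hence $\bar A^i_j$ is a matrix with $n_2$ or $m$ rows according as $i=0$ or $i\ge 1$, and with $n_1$ or $m$ columns according as $j=0$ or $j\ge 1$. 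I would also record, from \eqref{om2eta} and from the shape \eqref{Smatrix} of $\mathcal S$, that $\bar A^i_j=\bs{0}$ unless $i-q\le j\le i+p$; so in every row block and every column block of $\mathcal S$ only finitely many blocks are nonzero, which makes all the sums in \eqref{rankEqualities} finite.

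The key step is the elementary observation that any submatrix of a permutation matrix is a partial permutation matrix — at most one $1$ per row and per column — whose rank equals the number of $1$'s it contains, since its nonzero rows are distinct coordinate covectors and hence linearly independent. Applied to the block $\bar A^i_j$ of $\mathcal S$, this yields $r^i_j=\rank\big(\bar A^i_j\big)=\#\{1\text{'s of }\mathcal S\text{ lying inside the block }\bar A^i_j\}$. After that the four identities fall out by counting: row block $k$ of $\mathcal S$ has $n_2$ rows if $k=0$ and $m$ rows if $k\ge 1$, each of these rows contains exactly one $1$, and that $1$ sits in one of the finitely many nonzero blocks $\bar A^k_j$; summing the block counts over $j$ therefore recovers the number of rows of that row block, i.e.\ $\sum_{j\ge 0}r^k_j=n_2$ for $k=0$ and $=m$ for $k\ge 1$. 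Performing the same count over the columns of $\mathcal S$, each of which again carries exactly one $1$, gives $\sum_{i\ge 0}r^i_k=n_1$ for $k=0$ and $=m$ for $k\ge 1$.

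There is no serious obstacle here; the argument is essentially bookkeeping. The only points that need care are the finiteness of the sums (guaranteed by the band structure $\bar A^i_j=\bs{0}$ for $j>p+i$ or $i>j+q$) and the use of the fact that $\mathcal S$ has \emph{exactly} one $1$ in each row and each column rather than merely at most one, which is precisely what property~C3 asserts.
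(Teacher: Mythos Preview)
Your argument is correct and is precisely the paper's own approach: the paper's proof consists of the single sentence ``This is because the matrix $\mathcal{S}$ is an infinite permutation matrix,'' and your write-up is exactly the unpacking of that sentence via the block sizes and the ``rank $=$ number of $1$'s'' count. Nothing further is needed.
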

\begin{proof} This is because the matrix $\mathcal{S}$ is an infinite permutation matrix.\end{proof}

\begin{Proposition}\label{rankInequality2} $r^i_j$ satisfy the following inequalities:
\begin{enumerate}\itemsep=0pt
\item[$(i)$] for $i,j \ne 0$, then
\begin{gather*}
r^i_j \le \min\left\{\sum_{k = 0}^{j+1} r^{i+1}_k, \sum_{k = 0}^{i+1} r^k_{j+1}\right\};
\end{gather*}

\item[$(ii)$] for $j\ne 0$,
\begin{gather*}
r^0_j\le \min\left\{ r^0_{j+1}+ r^1_{j+1}, (n_2 - m)+\sum_{k = 0}^{j+1} r^1_k \right\};
\end{gather*}

\item[$(iii)$] for $i\ne 0$,
\begin{gather*}
r^i_0\le \min\left\{ r^{i+1}_0+ r^{i+1}_1, (n_1 - m)+\sum_{k = 0}^{i+1} r^k_1 \right\};
\end{gather*}

\item[$(iv)$] $r^0_0\le\min\big\{ (n_1-m)+r^0_1+r^1_1, (n_2-m)+r^1_0+ r^1_1\big\}$.
\end{enumerate}
\end{Proposition}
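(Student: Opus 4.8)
The strategy is to exploit the structure constants of the coframes $\bar\bsomega^\bullet$ and $\bar\bseta^\bullet$, together with the fact that $\mathcal{S}$ is an infinite permutation matrix intertwining them via ${\Phi^{(\infty)}}^*$. The key geometric input is Condition~C2: for $\ell\ge 1$ one has $\ed\bar\bseta^\ell\equiv -\bar\bseta^{\ell+1}\W\ed t$ modulo $\bar\bseta^0,\dots,\bar\bseta^\ell$, and the analogous relation for $\bar\bsomega^\ell$; moreover, since $\bar\bseta^0 = \ed\y - \g(\y,\v)\ed t$, taking $\ed$ gives $\ed\bar\bseta^0\equiv F\,\bar\bseta^1\W\ed t$ modulo $\bar\bseta^0$ for a matrix $F$ of full rank $m$ (this is exactly the computation already carried out to establish P3). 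Pulling these back under ${\Phi^{(\infty)}}^*$ and using C3, i.e.\ ${\Phi^{(\infty)}}^*\bar\bseta^i = \sum_j \bar A^i_j\bar\bsomega^j$, produces identities among the blocks $\bar A^i_j$; comparing the $\bar\bsomega^{j+1}\W\ed t$ components of both sides, reduced modulo $\bar\bsomega^0,\dots,\bar\bsomega^{j}$ (or modulo $\bar\bsomega^0,\dots,\bar\bsomega^{\text{something}}$), yields relations of the shape ``$\bar A^i_j$ equals a linear combination of certain $\bar A^{i+1}_k$ with $k\le j+1$'' and symmetrically ``$\bar A^i_j$ equals a linear combination of $\bar A^k_{j+1}$ with $k\le i+1$.'' Each such matrix identity immediately bounds $\rank(\bar A^i_j) = r^i_j$ by the rank of the relevant block sum, which (since $\mathcal{S}$ is a permutation matrix, so blocks in a fixed row or column have disjoint supports) is just the corresponding sum of ranks $\sum r^{i+1}_k$ or $\sum r^k_{j+1}$. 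This gives part~(i).

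For parts (ii)--(iv), the same differentiation argument is applied, but now at least one index is $0$, so the ``structure equation'' is the $F$-equation $\ed\bar\bseta^0\equiv F\bar\bseta^1\W\ed t$ rather than the clean $\ed\bar\bseta^\ell = -\bar\bseta^{\ell+1}\W\ed t$ of C2; likewise on the $\bar\bsomega$ side when $j=0$. The presence of $F$ (full rank $m$, acting on an $n_2$-dimensional space, or the $\f$-analogue acting on $n_1$) is what introduces the correction terms $(n_2-m)$ and $(n_1-m)$: after pulling back $\ed\bar\bseta^0 - F\bar\bseta^1\W\ed t\in\langle\bar\bseta^0\rangle$ and reducing, one sees that $\bar A^0_{j}$ (or $\bar A^{i}_0$, or $\bar A^0_0$) fits into an equation whose right-hand side involves $F$ applied to a block sum of $\bar A^1_k$'s; bounding $\rank$ of a product $F\cdot(\text{block column})$ and accounting for the possible drop in rank when $F$ is restricted to a subspace of dimension $< n_2$ gives the $(n_2-m)+\sum_{k=0}^{j+1} r^1_k$ term in (ii), and symmetrically for (iii) and (iv). The first entry in each $\min$ in (ii)--(iv) (namely $r^0_{j+1}+r^1_{j+1}$, etc.) comes from the \emph{other} direction — differentiating $\bar\bsomega^j$ and matching — exactly as in the $F$-free case of (i) but with the row/column sum truncated because of the shape \eqref{Smatrix} (the zero pattern $\bar A^{q+\ell}_k=\bs 0$ for $\ell>k$ limits which blocks can appear).

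Concretely, the plan is: \emph{Step 1.} Write out the structure equations for $\ed\bar\bseta^\ell$ and $\ed\bar\bsomega^\ell$ for all $\ell\ge 0$, isolating the $F$-matrix ($\ell=0$, $\bseta$-side) and the corresponding full-rank matrix on the $\bsomega$-side. \emph{Step 2.} Apply ${\Phi^{(\infty)}}^*$ to the $\bar\bseta$-structure equations, substitute ${\Phi^{(\infty)}}^*\bar\bseta^i=\sum_j\bar A^i_j\bar\bsomega^j$, and reduce modulo an appropriate initial segment of the $\bar\bsomega$'s to get, for each $(i,j)$, an identity expressing $\bar A^i_j$ (up to an invertible factor) in terms of $\{\bar A^{i+1}_k\}_{k\le j+1}$ — this handles the second argument of each $\min$ via $i\to i+1$. \emph{Step 3.} Do the mirror-image computation using the $\bar\bsomega$-structure equations pulled back by ${\Psi^{(\infty)}}^*$ (or equivalently use $\bar{\bs A}=\bar{\bs B}^T$), obtaining the first argument of each $\min$. \emph{Step 4.} Take ranks, using that $\mathcal{S}$ is a permutation matrix so that $\rank$ of a block sum over a row or column segment is the sum of the $r$'s, and using that $F$ (resp.\ its $\bsomega$-analogue) has full rank $m$ but acts on an $n_2$- (resp.\ $n_1$-) dimensional space to get the additive $(n_2-m)$, $(n_1-m)$ slack.

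The main obstacle I anticipate is bookkeeping the \emph{reduction modulo} which forms: one must choose, for each $(i,j)$, exactly the right submodule $\langle\bar\bsomega^0,\dots,\bar\bsomega^{?}\rangle$ to quotient by so that the surviving terms are precisely the block sum appearing in the claimed bound and no extraneous blocks leak in. In particular, getting the sharp upper limit $j+1$ (resp.\ $i+1$) on the summation index, rather than a weaker bound, requires carefully tracking the shape \eqref{Smatrix} — the vanishing $\bar A^{q+\ell}_k=\bs 0$ for $\ell>k$ — and the equalities $\bar g^i_i=\bar g^{i+1}_{i+1}$, $\bar h^i_i=\bar h^{i+1}_{i+1}$ that make the ``stabilized'' part of the structure equations honest. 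The rank-slack estimate in (ii)--(iv), $\rank(F\cdot X)\ge \rank(X)-(n_2-m)$ for $X$ with $n_2$ rows and $F$ injective of rank $m$ — wait, more precisely $\rank(FX)\ge \rank X - \dim\ker(F^T)$-type bookkeeping — also needs a clean statement; this is elementary linear algebra but is the place where an off-by-something error is easiest to make.
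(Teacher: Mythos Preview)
Your overall strategy---differentiate the structure equations C2 for $\bar\bseta^i$ and $\bar\bsomega^j$, compare, and extract rank bounds---is exactly the paper's. But your execution plan diverges from the paper's in a way that creates the very bookkeeping obstacle you anticipate, and the paper's route avoids it entirely.

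You propose to work at the block level: pull back $\ed\bar\bseta^i\equiv -\bar\bseta^{i+1}\W\ed t$ under $\Phi^{(\infty)*}$, substitute $\Phi^{(\infty)*}\bar\bseta^i=\sum_{j'}\bar A^i_{j'}\bar\bsomega^{j'}$, reduce modulo an initial segment of the $\bar\bsomega$'s, and read off an identity of the form ``$\bar A^i_j=(\text{invertible})\cdot(\text{combination of }\bar A^{i+1}_k,\ k\le j+1)$.'' The trouble is that $\Phi^{(\infty)*}\bar\bseta^i$ involves $\bar\bsomega^{j'}$ for \emph{all} $j'\le p+i$, and for $j'>j$ the structure equation $\ed\bar\bsomega^{j'}\equiv -\bar\bsomega^{j'+1}\W\ed t\bmod\bar\bsomega^0,\dots,\bar\bsomega^{j'}$ does \emph{not} reduce to zero modulo $\bar\bsomega^0,\dots,\bar\bsomega^j$; those higher terms survive and contaminate the $\bar\bsomega^{j+1}\W\ed t$ coefficient. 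So the clean block identity you want does not exist without a further reduction, and you have not said what that reduction is.

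The paper sidesteps this by exploiting the permutation structure of $\mathcal{S}$ \emph{before} differentiating rather than after. Since $\bar A^i_j$ is a $0$--$1$ matrix, its nonzero entries record $r^i_j$ componentwise equalities $\bar\eta^i_{a_k}=\bar\omega^j_{b_k}$ (via pullback). Differentiating each such scalar equality and using C2 on both sides gives $\bar\eta^{i+1}_{a_k}\W\ed t\equiv\bar\omega^{j+1}_{b_k}\W\ed t$ modulo $\bar\bseta^0,\dots,\bar\bseta^i,\bar\bsomega^0,\dots,\bar\bsomega^j$---the key point being that one reduces modulo \emph{both} initial segments simultaneously. Now simply count: among the $r^i_j$ indices $k$, at most $\sum_{\ell=0}^{j}r^{i+1}_\ell$ have $\bar\eta^{i+1}_{a_k}$ absorbed into $\langle\bar\bsomega^0,\dots,\bar\bsomega^j\rangle$, at most $\sum_{\ell=0}^{i}r^\ell_{j+1}$ have $\bar\omega^{j+1}_{b_k}$ absorbed into $\langle\bar\bseta^0,\dots,\bar\bseta^i\rangle$, and the remaining ones force genuine equalities $\bar\eta^{i+1}_{a_k}=\bar\omega^{j+1}_{b_k}$, of which there are at most $r^{i+1}_{j+1}$. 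This gives part~(i) directly; no block-matrix identity is needed.

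For (iv) the paper again works componentwise: from $\bar\eta^0_{a_k}=\bar\omega^0_{b_k}$ one gets $\ed\bar\eta^0_{a_k}\equiv -\big(\sum_\alpha C^\alpha_k\bar\eta^1_\alpha\big)\W\ed t\bmod\bar\bseta^0$ and similarly for $\ed\bar\omega^0_{b_k}$ with coefficients $D^\alpha_k$. The $(n_2-m)$ correction arises as the \emph{lower} bound $\rank(C^\alpha_k)\ge m-(n_2-r^0_0)$ (since the full $\ed\bar\bseta^0$ has rank $m$ modulo $\bar\bseta^0$, and you have selected $r^0_0$ of its $n_2$ components), while the matching argument modulo both $\bar\bsomega^0$ and $\bar\bseta^0$ gives the \emph{upper} bound $\rank(C^\alpha_k)\le r^1_0+r^1_1$. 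Combining these two bounds on the same auxiliary rank yields (iv); (ii) and (iii) are hybrids. Your description of the $(n_2-m)$ term via a product bound $\rank(F\cdot X)$ is morally the same linear algebra, but note that in the paper's framing the slack comes from a lower bound on an auxiliary rank, not an upper bound---your closing paragraph has the inequality the wrong way around. Also, your attribution of the first entry in each $\min$ of (ii)--(iii) to ``truncation from the shape \eqref{Smatrix}'' is off: $r^0_{j+1}+r^1_{j+1}$ is simply $\sum_{k=0}^{i+1}r^k_{j+1}$ with $i=0$; no vanishing pattern is invoked.
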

\begin{proof}To prove (i), the case when $r^i_j = 0$ is trivial. Otherwise, suppose that the submatrix~$\bar A^i_j$ of~$\mathcal{S}$ has $1$'s precisely at positions $(a_k, b_k)$, $1\le a_k, b_k\le m$, $1\le k \le r^i_j$. Dropping pullback symbols, we have
\begin{gather*}
\bar \eta^i_{a_{k}} = \bar\omega^j_{b_k}.
\end{gather*}
Condition~C2 demands
\begin{gather}\label{deta}
\ed\bar \eta^i_{a_{k}} \equiv - \bar \eta^{i+1}_{a_{k}}\W \ed t \mod \bseta^0,\dots ,\bseta^i
\end{gather}
and
\begin{gather}\label{domega}
\ed\bar \omega^j_{b_{k}} \equiv - \bar \omega^{j+1}_{b_{k}}\W \ed t \mod \bsomega^0,\dots ,\bsomega^j.
\end{gather}
At most $\sum\limits_{k = 0}^{j} r^{i+1}_k$ congruences in \eqref{deta} are reduced to the following form once the congruence is taken modulo $\bseta^0,\dots ,\bseta^i,\bsomega^0,\dots ,\bsomega^j$:
\begin{gather*}
\ed\bar \eta^i_{a_{k}} \equiv 0\mod\bseta^0,\dots ,\bseta^i,\bsomega^0,\dots ,\bsomega^j;
\end{gather*}
similarly, at most $\sum\limits_{k = 0}^{i} r^k_{j+1}$ congruences in \eqref{domega} are reduced to the following form once the congruence is taken modulo $\bsomega^0,\dots ,\bsomega^j, \bseta^0,\dots ,\bseta^i$:
\begin{gather*}
\ed\bar \omega^j_{b_{k}} \equiv 0 \mod\bsomega^0,\dots ,\bsomega^j, \bseta^0,\dots ,\bseta^i.
\end{gather*}
The remaining congruences in~\eqref{deta} and~\eqref{domega}, reduced modulo $\bseta^0,\dots ,\bseta^i,\bsomega^0,\dots ,\bsomega^j$, must match up as identical congruences; in particular, the corresponding $\bar\eta^{i+1}_{a_k}$ and $\bar\omega^{j+1}_{b_k}$ must be equal. Since the equalities $\bar\eta^{i+1}_{a_k}=\bar\omega^{j+1}_{b_k}$ are at most
$r^{i+1}_{j+1}$ in number, we obtain the inequalities
\begin{gather*}
r^i_j \le \sum_{k = 0}^{j+1}r^{i+1}_k, \qquad r^i_j\le \sum_{k = 0}^{i+1} r^i_{j+1},
\end{gather*}
which justifies~(i).

To prove (iv), suppose that $r^0_0>0$ and that the submatrix $\bar A^0_0$ has $1$'s precisely at positions $(a_k,b_k)$, $1\le a_k\le n_2$, $1\le b_k \le n_1$, $1\le k\le r^0_0$. We have
\begin{gather*}
\bar \eta^0_{a_k} = \bar\omega^0_{b_k}.
\end{gather*}
There exist functions $C^1_k,\dots ,C^s_k$, $D^1_k,\dots ,D^s_k$ such that
\begin{gather}\label{deta0}
\ed\bar \eta^0_{a_k} \equiv - \big(C^1_k \bar\eta^{1}_1+\cdots +C^s_k \bar\eta^{1}_s\big)\W \ed t \mod \bseta^0,\\
\label{domega0}
\ed\bar \omega^0_{b_k} \equiv - \big(D^1_k \bar\omega^{1}_1+\cdots + D^s_k \bar\omega^{1}_s\big)\W \ed t \mod \bsomega^0.
\end{gather}
Since the rank of $\ed\bseta^0$ (modulo $\bseta^0$) is $m$, we have
\begin{gather}\label{Cineq}
\rank\big(C^\alpha_k\big)\ge m - \big(n_2 - r^0_0\big);
\end{gather}
similarly, we have
\begin{gather}\label{Dineq}
\rank\big(D^\alpha_k\big)\ge m - \big(n_1 - r^0_0\big).
\end{gather}
On the other hand, by taking the congruences~\eqref{deta0} and~\eqref{domega0} reducing modulo both~$\bsomega^0$ and~$\bseta^0$, it is not hard to see that
\begin{gather}\label{CDineq}
\rank\big(C^\alpha_k\big) \le r^1_0+r^1_1, \qquad \rank\big(D^\alpha_k\big)\le r^0_1+r^1_1.
\end{gather}
Combining \eqref{Cineq}, \eqref{Dineq} and \eqref{CDineq}, we obtain the inequalities in~(iv).

The proofs of (ii) and (iii) are similar; we leave them to the reader.\end{proof}

\subsection{Admissible heights}

\begin{Theorem}\label{heightTheorem}
Let $\mathcal{E}$ be a dynamic equivalence between two control systems with $n_1$, $n_2$ states, respectively, and $m$ controls. The height $(p,q)$ of $\mathcal{E}$ must satisfy: if $p,q >0$, then
\begin{gather}\label{heightIneq}
\min\{(p - 1)\delta+r_1 p + n_1, (q-1)\delta+r_2 q + n_2\}\ge \max\{ r_1 p + n_1, r_2q + n_2\},
\end{gather}
where $r_1 = \rank(A_\infty)$, $r_2 = \rank(B_\infty)$, $\delta= m - r_1 - r_2$.
\end{Theorem}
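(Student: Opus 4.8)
The plan is to convert \eqref{heightIneq} into two elementary scalar inequalities and then reduce one of them (the other will follow by symmetry) to a counting statement about the infinite permutation matrix $\mathcal{S}$ of \eqref{Smatrix}. As a first step, write $X = r_1 p + n_1$ and $Y = r_2 q + n_2$, so that \eqref{heightIneq} reads $\min\{(p-1)\delta + X,\ (q-1)\delta + Y\} \ge \max\{X,Y\}$. Since $\delta \ge 0$ by Lemma~\ref{rankLemma} and $p,q \ge 1$, the inequalities $(p-1)\delta + X \ge X$ and $(q-1)\delta + Y \ge Y$ are automatic, so the theorem becomes equivalent to the pair $(p-1)\delta + X \ge Y$ and $(q-1)\delta + Y \ge X$. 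Expanding $\delta = m - r_1 - r_2$, these are respectively
\[n_2 + (p+q-1)r_2 \le n_1 + r_1 + (p-1)m,\qquad n_1 + (p+q-1)r_1 \le n_2 + r_2 + (q-1)m.\]
The second is obtained from the first by passing to the inverse dynamic equivalence (height $(q,p)$, associated permutation matrix $\mathcal{S}^T = \bar{\bs B}$, with $(n_1,p,r_1)$ and $(n_2,q,r_2)$ interchanged), so it suffices to establish the first, which I denote $(\star)$.

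To prove $(\star)$ I would use only the following facts about $\mathcal{R}(\mathcal{S}) = (r^i_j)$: the band shape $r^i_j = 0$ unless $i - q \le j \le i + p$ (from \eqref{AB} and \eqref{Smatrix}); the boundary values $r^i_{i+p} = r_1$ for $i \ge 0$ and $r^i_{i-q} = r_2$ for $i \ge q$ (the Corollaries following Proposition~\ref{rankProp}); and the row and column sums of Proposition~\ref{rankEquality}. The key estimate is this: for every block column $j$ with $p \le j \le 2p+q-2$ the support of column $j$ is the row range $[j-p,\ j+q]$, its entries sum to $m$, and its two end entries are $r^{j-p}_j = r_1$ and $r^{j+q}_j = r_2$; hence by nonnegativity
\[\sum_{i=j-p+1}^{p+q-1} r^i_j \ \le\ \sum_{i=j-p+1}^{j+q-1} r^i_j \ =\ m - r_1 - r_2 \ =\ \delta.\]
Summing over the $p+q-1$ values $j = p, p+1, \dots, 2p+q-2$ shows that $T := \sum_{j=p}^{2p+q-2}\sum_{i=j-p+1}^{p+q-1} r^i_j$ satisfies $T \le (p+q-1)\,\delta$.

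It then remains to evaluate $T$. The pairs $(i,j)$ appearing in $T$ form the staircase $\{(i,j): 1 \le i \le p+q-1,\ p \le j \le i+p-1\}$, so I would recount $T$ by rows: for each such $i$ the support of row $i$ splits disjointly into the column blocks $[0,p-1]$, $[p,i+p-1]$ and $\{i+p\}$, so that $\sum_{j=p}^{i+p-1} r^i_j = m - r_1 - s_i$ with $s_i := \sum_{j=0}^{p-1} r^i_j$. Summing on $i$ and using $\sum_i r^i_0 = n_1$, $\sum_i r^i_j = m$ for $1 \le j \le p-1$, $\sum_j r^0_j = n_2$ and $r^0_p = r_1$ to evaluate $\sum_i s_i$, one gets a closed form $T = qm - (p+q)r_1 - n_1 + n_2$. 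Substituting this into $T \le (p+q-1)\delta$ and rearranging produces exactly $(\star)$, completing the argument.

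The only point I expect to carry genuine content is the key estimate in the second paragraph: the observation that in each sufficiently late column of $\mathcal{S}$ the entries strictly between the two boundary diagonals have total rank exactly $\delta$. The first and third steps are, respectively, elementary algebra and careful bookkeeping with Proposition~\ref{rankEquality}. The spot that needs care is the choice of cutoffs: the column index runs only up to $2p+q-2$, and each column sum is truncated at row $p+q-1$ because that is the last block row meeting block columns $0,\dots,p-1$ — precisely what makes the row-count in the third step close up with no leftover terms.
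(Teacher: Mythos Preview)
Your argument is correct. The core idea is the same as the paper's: both proofs exploit only the band shape of $\mathcal{R}(\mathcal{S})$, the row/column sums of Proposition~\ref{rankEquality}, and the fixed boundary values $r^{k}_{p+k}=r_1$, $r^{q+k}_{k}=r_2$, and both hinge on the observation that in any block column $j\ge 1$ the interior entries between the two diagonals total exactly $\delta$. The organization differs. The paper sets
\[
C=\sum_{i=0}^{q-1}\sum_{j=p}^{p+i} r^i_j-\sum_{i=0}^{q-1} r^i_{p+i},\qquad
D=\sum_{j=0}^{p-1}\sum_{i=q}^{q+j} r^i_j-\sum_{j=0}^{p-1} r^{q+j}_j,
\]
bounds $0\le C\le(q-1)\delta$, $0\le D\le(p-1)\delta$, and then verifies that a single quantity $E$ admits two closed forms (one involving $C$, one involving $D$), from which both sides of~\eqref{heightIneq} drop out simultaneously. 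You instead first strip the inequality down to the pair $(p-1)\delta+X\ge Y$ and $(q-1)\delta+Y\ge X$, invoke the $\Phi\leftrightarrow\Psi$ symmetry to reduce to one of them, and then prove that one by bounding your larger staircase sum $T$ (over columns $p,\dots,2p+q-2$ and rows $1,\dots,p+q-1$) by $(p+q-1)\delta$ and evaluating $T$ exactly via row sums. Your triangle is an enlargement of the paper's $C$-triangle; the extra rows are what allow the row-count to close into the tidy formula $T=qm-(p+q)r_1-n_1+n_2$, whereas the paper compensates for its smaller triangles with the two-expressions-for-$E$ identity. The net bookkeeping is equivalent, but your route is more transparent: the reduction to $(\star)$ and the symmetry step make clear from the outset that only one scalar inequality carries content, and the cutoff choices you flag ($j\le 2p+q-2$, $i\le p+q-1$) are exactly what make the argument self-contained.
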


\begin{proof} The rank matrix associated to $\mathcal{E}$ takes the form
\begin{gather*}
\mathcal{R} = \left(\begin{matrix}
r^0_0& \cdots & r^0_p & {0}&\cdots\\
\vdots & \ddots & \vdots&&\ddots \\
r^q_0&\cdots & r^q_p \\
{0}&&&\ddots\\
\vdots&\ddots
\end{matrix}
\right),
\end{gather*}
where $r^k_{p+k} = \rank(A_\infty) = r_1$, $r^{q+k}_k = \rank(B_\infty) = r_2$ for all $k\ge 0$.

Let
\begin{gather*}
C:= \sum_{i = 0}^{q-1}\sum_{j = p}^{p+i} r^i_j - \sum_{i = 0}^{q-1}r^i_{p+i},\qquad
D:= \sum_{j = 0}^{p-1}\sum_{i = q}^{q+j} r^i_j - \sum_{j = 0}^{p-1}r^{q+j}_j.
\end{gather*}

By Proposition \ref{rankEquality}, we have
\begin{gather}\label{CDCond}
0\le C \le (q-1)\delta, \qquad 0\le D\le (p-1)\delta.
\end{gather}
Furthermore, we have
\begin{gather*}
E:= (q-1)(m-r_1) - \big(n_1 - r^0_0- r_2\big) - C = (p-1)(m - r_2) - \big(n_2-r^0_0 - r_1\big) - D.
\end{gather*}
Using \eqref{CDCond}, we obtain
\begin{gather*}
 \max\{r_1p + n_1, r_2q + n_2\}\le E - r^0_0 + n_1+n_2\\
\qquad{} \le \min\{(p-1)\delta+r_1p+n_1, (q-1)\delta+r_2q +n_2\}.
\end{gather*}
The conclusion follows.\end{proof}

\begin{Corollary}\label{heightCor} Let $\mathcal{E}$ be a dynamic equivalence between two control systems with $n_1$, $n_2$ states, respectively, and $m$ controls. If $\rank(A_\infty)+\rank(B_\infty)=m$, then the height $(p,q)$ of $\mathcal{E}$ must satisfy: when $p,q>0$, we have
\begin{gather}\label{heightTheoremEq}
n_1+ \rank(A_\infty)\cdot p= n_2 + \rank(B_\infty)\cdot q.
\end{gather}
\end{Corollary}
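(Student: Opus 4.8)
The plan is to obtain Corollary~\ref{heightCor} as the degenerate case $\delta = 0$ of Theorem~\ref{heightTheorem}. Writing $r_1 = \rank(A_\infty)$, $r_2 = \rank(B_\infty)$, and $\delta = m - r_1 - r_2$ as in that theorem, the hypothesis $\rank(A_\infty)+\rank(B_\infty)=m$ is precisely the condition $\delta = 0$. So the first step is simply to specialize inequality~\eqref{heightIneq}: setting $\delta = 0$ there kills every term of the form $(p-1)\delta$ and $(q-1)\delta$, and~\eqref{heightIneq} collapses to
\begin{gather*}
\min\{r_1 p + n_1,\, r_2 q + n_2\}\ \ge\ \max\{r_1 p + n_1,\, r_2 q + n_2\}.
\end{gather*}

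The second step is the elementary observation that for any two real numbers $a$, $b$ one always has $\min\{a,b\}\le\max\{a,b\}$, with equality exactly when $a=b$. Combining this with the displayed inequality forces $r_1 p + n_1 = r_2 q + n_2$, that is,
\begin{gather*}
n_1 + \rank(A_\infty)\cdot p = n_2 + \rank(B_\infty)\cdot q,
\end{gather*}
which is~\eqref{heightTheoremEq}. I do not expect any genuine obstacle here: the entire content of the corollary is already packaged in Theorem~\ref{heightTheorem}, and the present statement merely isolates the one case in which the min--max sandwich has no slack.

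As a sanity check -- and an alternative route that reuses the proof of Theorem~\ref{heightTheorem} rather than quoting its final inequality -- one can return to the auxiliary quantities $C$, $D$, $E$ defined there. When $\delta = 0$, the bounds $0\le C\le(q-1)\delta$ and $0\le D\le(p-1)\delta$ force $C = D = 0$; substituting $m - r_1 = r_2$ and $m - r_2 = r_1$ into the two expressions for $E$ gives $q r_2 - n_1 + r^0_0 = p r_1 - n_2 + r^0_0$, and cancelling $r^0_0$ yields the same equality. Either way the corollary follows immediately.
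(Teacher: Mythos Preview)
Your argument is correct and matches the paper's own proof, which simply states that the corollary is an immediate consequence of setting $\delta = 0$ in inequality~\eqref{heightIneq}. The extra sanity check via $C$, $D$, $E$ is sound but not needed.
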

\begin{proof} This is an immediate consequence of setting $\delta = 0$ in~\eqref{heightIneq}.\end{proof}

\begin{Theorem}\label{n=2Cor}The height $(p,q)$ of a dynamic equivalence between two control systems with $n_1$ and $n_2$ states, respectively, and $2$ controls must satisfy $n_1+p = n_2+q$.
\end{Theorem}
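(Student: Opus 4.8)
The plan is to read this off as the $m=2$ specialization of Corollary~\ref{heightCor}, after clearing away the degenerate cases by hand. First I would invoke Theorem~\ref{sameSTheorem} to conclude that the second system also has exactly two control variables, so that the entire rank-matrix apparatus of Section~3 is available with $m=2$.

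Next, dispose of the static case: if $p=q=0$ the equivalence is static (Remark~(c) following Definition~\ref{DynEqDef}), hence $n_1=n_2$, so $n_1+p=n_2+q$ holds trivially. What remains is to reduce every non-static equivalence to the situation $p,q>0$. Here the plan is to use the partial-prolongation device recalled just before Proposition~\ref{porq=0Prop}: replace one of the systems by a partial prolongation so that the two now have equal numbers of states, keep track of how $(p,q)$ and $(n_1,n_2)$ are altered, and apply Proposition~\ref{porq=0Prop} to exclude the possibility that exactly one of $p,q$ vanishes. I expect this bookkeeping --- verifying that the identity $n_1+p=n_2+q$ is unaffected by, and that positivity of $p$ and $q$ is forced after, the partial prolongation --- to be the only real obstacle; everything else is automatic.

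With $p,q>0$ in hand, the conclusion follows with no computation. Lemma~\ref{rankLemma} applied with $m=2$ gives $2\le\rank(A_\infty)+\rank(B_\infty)\le 2$, so $\rank(A_\infty)+\rank(B_\infty)=2=m$; together with Property~P3 (both ranks positive) this forces $\rank(A_\infty)=\rank(B_\infty)=1$. The hypothesis of Corollary~\ref{heightCor} is then met, and it yields $n_1+\rank(A_\infty)\cdot p=n_2+\rank(B_\infty)\cdot q$, i.e., $n_1+p=n_2+q$. Equivalently, one may substitute $r_1=r_2=1$ and $\delta=m-r_1-r_2=0$ directly into inequality~\eqref{heightIneq} of Theorem~\ref{heightTheorem}: the $\max$ on the right and the $\min$ on the left then coincide, collapsing the inequality to the asserted equality.
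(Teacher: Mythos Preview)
Your plan is correct and coincides with the paper's proof: both reduce the $p,q>0$ case to $\rank(A_\infty)=\rank(B_\infty)=1$ via Lemma~\ref{rankLemma} and then apply Corollary~\ref{heightCor}. The partial-prolongation bookkeeping you flag as the one real obstacle is exactly what the paper carries out for the residual case $p=0$, $n_1>n_2$: it prolongs $N$ in a control direction $\beta\neq\alpha$ (chosen so that $\Psi$ still depends on $v_\alpha^{(q)}$), obtaining a new equivalence of height exactly $(n_1-n_2,q)$ between systems of equal state dimension, and then invokes the already-settled equal-state case.
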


\begin{proof}Let $(M,\ct_M)$ and $(N,\ct_N)$ be the two control systems in question with a dynamic equivalence given by submersions $\Phi\colon M^{(p)}\rightarrow N$ and $\Psi\colon N^{(q)}\rightarrow M$.

First consider the case when $n_1 = n_2$. By Proposition \ref{porq=0Prop}, we have either $p = q = 0$ or $p,q>0$. In the former case, there is nothing to prove. In the latter case, when $m = 2$, Lemma~\ref{rankLemma} implies that the only possibility is $\rank(A_\infty) = \rank(B_\infty) = 1$; then apply Corollary~\ref{heightCor}.

It remains to consider the case when $n_1\ne n_2$. Assume that $n_1>n_2$. The case when $p,q>0$ follows from a similar argument as the above. Thus, it suffices to consider the case when $p = 0$. Suppose that, under a choice of coordinates $(t,\y,\v)$ on $N$, $\Psi$ depends nontrivially on $v_\alpha^{(q)}$ for some $\alpha$. Fixing an index $\beta\ne \alpha$, we can construct a partial prolongation $\big(\bar N,\bar\ct_N\big)$ by letting $\bar N = N\times\R^{n_1 - n_2}$ with coordinates $\big(t,\y,\v,v_\beta^{(1)},\dots ,v_\beta^{(n_1 - n_2)}\big)$ and $\bar\ct_N$ being generated by
\begin{gather*}
\ct_N, \,\ed v_\beta - v_\beta^{(1)}\ed t,\, \dots,\, \ed v_{\beta}^{(n_1 - n_2 - 1)} - v_\beta^{(n_1 - n_2)}\ed t.
\end{gather*}
Let $\bar\Phi\colon M^{(n_1 - n_2)}\rightarrow \bar N$ (indicated by the dashed arrow in the diagram below) denote the submersion induced by~$\Phi$. Let $\bar\pi^{(q)}\colon \bar N^{(q)}\rightarrow N^{(q)}$ be the submersion induced by the natural projection $\bar\pi\colon \bar N\rightarrow N$
\begin{equation*}
\begin{tikzcd}[column sep=small]
M^{(n_1- n_2)} \arrow[rrdd, bend left=20, dashed]\arrow[dd] &&\bar N^{(q)} \arrow[ld,swap, "\bar\pi^{(q)}"]\arrow[dd]\\
& N^{(q)}\arrow[ld,swap, "\Psi"]\arrow[dd,"\pi"]&\\
M\arrow[rd,"\Phi"]&&\bar N\arrow[ld, swap,"\bar\pi"]\\
&N.
 \end{tikzcd}
\end{equation*}

It is easy to see that the pair of submersions $\bar\Phi$ and $\Psi\circ\bar\pi^{(q)}$ establishes a dynamic equivalence between $(M,\ct_M)$ and $\big(\bar N,\bar\ct_N\big)$ of height $(n_1 - n_2, q)$. Since $M$ and $\bar N$ have the same dimension, we conclude that $n_1 - n_2 = q$. This completes the proof.
\end{proof}

\begin{Remark} Theorem~\ref{n=2Cor} suggests a qualitative distinction between control systems with $m = 2$ and those with $m>2$. In fact, when $m>2$, it may be, for a dynamic equivalence, that $\rank(A_\infty)+\rank(B_\infty)< m$, and~\eqref{heightTheoremEq} does not need to hold. For details, see Example~\ref{example1} below.
\end{Remark}

\subsection{Examples}

\begin{Example}\label{example1} Let $(M,\ct_M)$ be a control system with 4 states and 3 controls, where $M$ has coordinates $(t, \x, \u)$ and $\ct_M$ is generated by
\begin{gather}\label{Example1CM}
\ed x_1 - u_1 \ed t, \qquad \ed x_2 - x_1 \ed t, \qquad \ed x_3 - u_2 \ed t, \qquad \ed x_4 - u_3 \ed t.
\end{gather}
A partial prolongation $(M,\ct_M)$ can be obtained by adjoining equations of the form $\dot u_\alpha^{(k)} = u_\alpha^{(k+1)}$ to the original system. For example, consider $N_1 = M\times \R^3$ with the coordinates
\begin{gather*}\big(t,\big(\x, u_1, u_2, u_2^{(1)}\big), \big(u_1^{(1)}, u_2^{(2)}, u_3\big)\big),\end{gather*}
where $u_1^{(1)}$, $u_2^{(1)}$ and $u_2^{(2)}$ are coordinates on the $\R^3$ component; let $\ct_1$ be the Pfaffian system generated by \eqref{Example1CM} and the $1$-forms
\begin{gather*}
\ed u_1 - u_1^{(1)}\ed t, \qquad \ed u_2 - u_2^{(1)}\ed t, \qquad \ed u_2^{(1)} - u_2^{(2)}\ed t.
\end{gather*}
Alternatively, consider $N_2 = M\times \R^3$ with the coordinates
\begin{gather*}\big(t,\big(\x, u_3, u_3^{(1)}, u_3^{(2)}\big), \big(u_1, u_2, u_3^{(3)}\big)\big),\end{gather*}
where $u_3^{(1)}$, $u_3^{(2)}$ and $u_3^{(3)}$ are coordinates on the~$\R^3$ component; let $\ct_2$ be the Pfaffian system generated by~\eqref{Example1CM} and the $1$-forms
\begin{gather*}
\ed u_3 - u_3^{(1)}\ed t, \qquad \ed u_{3}^{(1)} - u_3^{(2)} \ed t, \qquad \ed u_{3}^{(2)} - u_3^{(3)} \ed t.
\end{gather*}
The standard submersions $\Phi\colon N_1^{(3)}\rightarrow N_2$, $\Psi\colon N_2^{(2)}\rightarrow N_1$ give rise to a dynamic equivalence between $(N_1,\ct_1)$ and $(N_1,\ct_2)$ (both having~$7$ states and $3$ controls) with height $(p,q) = (3,2)$. The associated rank matrix is
\begin{gather*}
(r^i_j)=\left(
\begin{matrix}
4 &1 &1 &1&&&&&\\
2 &0 &0&0&1&&&&\\
1& 1 &0&0&0&1&&&\\
&1&1&0&0&0&1&&\\
&&1&1&0&0&0&1&\\
\hphantom{\ddots}&\hphantom{\ddots}&\hphantom{\ddots}&{\ddots}&\hphantom{\ddots}&\hphantom{\ddots}&\hphantom{\ddots}&\hphantom{\ddots}&\ddots
\end{matrix}
\right).
\end{gather*}
 In this example, \eqref{heightIneq} becomes an equality.
\end{Example}

\begin{Example} Let $(M,\ct_M)$ ($\dot\x = \f(\x,\u)$) and $(N,\ct_N)$ $(\dot \y = \f(\y,\v))$ be two copies of a same control system with $3$ states and $2$ controls:
\begin{gather*}
\begin{cases}
\dot x_1 = u_1,\\
\dot x_2 = u_2,\\
\dot x_3 = f(x_2,x_3,u_2),
\end{cases}\qquad
\begin{cases}
\dot y_1 = v_1,\\
\dot y_2 = v_2,\\
\dot y_3 = f(y_2,y_3,v_2).
\end{cases}
\end{gather*}
For any $p>1$, the following pair of submersions $\Phi\colon M^{(p)}\rightarrow N$ and $\Psi\colon N^{(p)}\rightarrow M$ define a~dynamic equivalence with height $(p,p)$ between $(M,\ct_M)$ and $(N,\ct_N)$:
\begin{gather*}
(\y,\v) = \Phi\big(\x,\u,\dots ,\u^{(p)}\big) = \big(u_2^{(p-1)} - x_1, x_2, x_3; u_2^{(p)} - u_1, u_2\big),\\
(\x,\u) = \Psi\big(\y,\v,\dots ,\v^{(p)}\big) = \big(v_2^{(p-1)} - y_1, y_2,y_3; v_2^{(p)} - v_1, v_2\big).
\end{gather*}
This shows that there is no \emph{a priori} upper bound of dynamic equivalences relating a fixed pair of control systems; it would only be meaningful to ask whether a `minimum
height' exists among all possible dynamic equivalences between such a fixed pair.
\end{Example}

\begin{Example} Assume $n_1>n_2$ in Theorem~\ref{n=2Cor}. This theorem tells us that even though $\dim N^{(q)} \ge \dim M$ (in order for $N^{(q)}$ to submerse onto~$M$) as long as
\begin{gather*}
 q \ge \frac{n_1 - n_2}{2},
\end{gather*}
an actual dynamic equivalence could only exist with $q\ge n_1 - n_2$. One may compare this fact with Theorem~52 in~\cite{Sluis}.

As an example, we consider the PVTOL system (see~\cite{MMR03}).

Using the coordinates $(t, (x,z,\theta, \dot x, \dot z, \dot \theta), (u_1,u_2))$ on $M$, let $\ct_M$ be generated by the six $1$-forms:
\begin{gather*}
\ed x - \dot x \ed t,\\
\ed z - \dot z\ed t,\\
\ed\theta - \dot\theta \ed t,\\
\ed \dot x - (-u_1\sin\theta +\epsilon u_2\cos\theta)\ed t,\\
\ed \dot z - (u_1\cos\theta+\epsilon u_2\sin\theta - 1)\ed t,\\
\ed\dot\theta - u_2 \ed t,
\end{gather*}
where $\epsilon$ is a constant.

Using the coordinates $(t, (y_1, y_2), (\dot y_1,\dot y_2))$ on $N$, let $\ct_N$ be the trivial system generated by the two $1$-forms:
\begin{align*}
&\ed y_1 - \dot y_1 \ed t,\\
&\ed y_2 - \dot y_2 \ed t.
\end{align*}

By the argument above, if there exists a dynamic equivalence between $(M,\ct_M)$ and $(N,\ct_N)$ (aka.~$(M,\ct_M)$ being `differentially flat'), then such a dynamic equivalence
must satisfy $q\ge n_1 - n_2 = 6 - 2 = 4$.

In fact, one can verify that the relation
\begin{gather*}
(y_1, y_2) = (x - \epsilon \sin \theta, z + \epsilon \cos\theta)
\end{gather*}
induces a dynamic equivalence between $(M,\ct_M)$ and $(N,\ct_N)$ of height $(p,q) = (0,4)$. For more details, see \cite{MMR03}.
\end{Example}

\subsection*{Acknowledgements}
We would like to thank our referees for their careful reading of the manuscript and helpful comments that led to considerable improvement of this article. The first and third authors were supported in part by NSF grant DMS-1206272. The first author was supported in part by a~Collaboration Grant for Mathematicians from the Simons Foundation.

\pdfbookmark[1]{References}{ref}
\LastPageEnding

\end{document}